\definecolor{shadecolor}{rgb}{0.8,0.8,0.8}
\newtheorem{theorem}{Theorem}[section]
\newtheorem{lemma}[theorem]{Lemma}
\newtheorem{proposition}[theorem]{Proposition}
\newtheorem{definition}[theorem]{Definition}
\newcommand{\specexercise}[1]{}
\newenvironment{proof}{{\flushleft \emph{Proof}:}}{\hfill\ding{110}}
\newenvironment{examples}{{\flushleft {\fontfamily{pzc}\bfseries\large Examples}:}}{\hfill $\blacktriangle\blacktriangle\blacktriangle$}
\newcommand{\brk}[1]{\left(#1\right)}          
\newcommand{\mymat}[1]{\begin{pmatrix} #1 \end{pmatrix}}
\newcommand{\Norm}[1]{\left\| #1 \right\|}     
\newcommand{\secref}[1]{Section~\ref{#1}}
\newcommand{\figref}[1]{Figure~\ref{#1}}
\newcommand{\thmref}[1]{Theorem~\ref{#1}}
\newcommand{\defref}[1]{Definition~\ref{#1}}
\newcommand{\propref}[1]{Proposition~\ref{#1}}
\newcommand{\lemref}[1]{Lemma~\ref{#1}}
\newcommand{\beq}{\begin{equation}}
\newcommand{\eeq}{\end{equation}}
\newcommand{\Emph}[1]{{\slshape\bfseries #1}}  
\newcommand{\g}{\mathfrak{g}}
\newcommand{\h}{\mathfrak{h}}
\newcommand{\euc}{\mathfrak{e}}
\newcommand{\Volume}{d\text{Vol}_\g}
\newcommand{\Volumen}{d\text{Vol}_{\g_n}}
\newcommand{\VolumeFn}{d\text{Vol}_{F_n^\star \g_n}}
\newcommand{\Volumeh}{d\text{Vol}_{\mathfrak{h}}}
\newcommand{\textVol}{\text{Vol}}
\newcommand{\M}{{\mathcal{M}}}
\newcommand{\tM}{\tilde{\M}}
\newcommand{\N}{\mathcal{N}}
\newcommand{\R}{\mathbb{R}}
\newcommand{\frakM}{{\mathfrak{M}}}
\newcommand{\limn}{\lim_{n\to\infty}}
\newcommand{\tr}{\operatorname{tr}}
\newcommand{\dist}{\operatorname{dist}}
\newcommand{\SO}[1]{\text{SO}(#1)}
\newcommand{\e}{\varepsilon}
\newcommand{\il}{\langle}
\newcommand{\ir}{\rangle}
\newcommand{\len}{\operatorname{Len}}
\newcommand{\dis}{\operatorname{Dis}}
\newcommand{\DIS}{\operatorname{\mathfrak{Dis}}}
\newcommand{\incl}{\hookrightarrow}
\DeclareMathOperator*{\limarrow}{\longrightarrow}
\numberwithin{equation}{section}
\begin{document}

\title{Limits of elastic models of converging Riemannian manifolds}
\author{Raz Kupferman and Cy Maor}
\date{}
\maketitle

\begin{abstract}
In non-linear incompatible elasticity, the configurations are maps from a non-Euclidean body manifold into the ambient Euclidean space, $\R^k$.
We prove the $\Gamma$-convergence of elastic energies for configurations of a converging sequence, $\M_n\to\M$, of body manifolds. This convergence result has several implications: (i) It can be viewed as a general structural stability property of the elastic model. (ii) It applies to certain classes of bodies with defects, and in particular, to the limit of bodies with increasingly dense edge-dislocations. (iii) It applies to approximation of elastic bodies by piecewise-affine manifolds. In the context of continuously-distributed dislocations, it reveals that the torsion field, which has  been used traditionally to quantify the density of dislocations, is immaterial in the limiting elastic model.
\end{abstract}

\tableofcontents

\section{Introduction}

One of the central notions in geometric theories of continuum mechanics, is that of a \emph{body manifold}, $\M$, whose points represent material elements. Mathematically, a body manifold is a topological, or differentiable manifold. Different types of continuum systems are characterized by different geometric structures imposed on the body manifolds. Body manifolds of elastic solids are commonly smooth manifolds endowed with a Riemannian metric, i.e., Riemannian manifolds. A \emph{configuration} of a body is an embedding of the body manifold into the ambient $k$-dimensional Euclidean space. In hyper-elastic materials, both static and dynamics properties of the material are dictated by an elastic energy, which is an integral measure of local distortions of the configurations.

In classical elasticity, the body manifold is assumed to be Euclidean, implying that it can be identified with a subset $\Omega$ of Euclidean $k$-dimensional space. The natural inclusion $\iota:\Omega\incl\R^k$ is called a rest, or a \emph{reference configuration}, and it is a state of zero elastic energy. 
In the last several years, there has been a growing interest in bodies that are \emph{pre-stressed}. 
Pre-stressed bodies are modeled as Riemannian manifolds $(\M,\g)$, where the \emph{reference metric} $\g$ is non-flat, i.e., has a non-zero Riemann curvature tensor. Thus, it cannot be embedded isometrically into Euclidean space. In particular, there is no notion of reference configuration. 

The elastic theory of pre-stressed bodies is commonly known as the theory of \emph{non-Euclidean}, or \emph{incompatible} elasticity. In its simplest versions, assuming material isotropy, the elastic energy associated with a configuration $u:\M\to \R^k$ is a distance of that configuration from being an isometric embedding (see e.g.~\cite{ESK08,KS14,LP10}). A prototypical energy density is
$\dist^p(du, \SO{\g,\euc})$ for some $p>1$, where $\euc$ is the Euclidean metric in $\R^k$ and $\SO{\g,\euc}$ is the set of orientation and inner-product preserving maps $T\M\to \R^k$ \cite{KS14,LP10}. A precise definition of this distance is given in the next section.

The theory of incompatible elasticity has numerous applications. It was proposed originally in the 1950s in the context of crystalline defects (see e.g.~Kondo~\cite{Kon55} and Bilby and co-workers~\cite{BBS55,BS56}). Then, the non-Euclidean metric structure associated with the defects is singular. 
In recent years, incompatible elasticity is motivated by studies of growing tissues, thermal expansion, and other mechanisms involving differential expansion of shrinkage \cite{ESK08,AESK11,AAESK12,OY10,KES07}; 
in these systems the intrinsic geometry is typically smooth.

In the context of crystalline defects, an important field of interest concerns distributed defects. 
Models of continuously-distributed defects were developed during the 1950s and 1960s.
Body manifolds of bodies with distributed defects are endowed with structure additional to a metric. For example, bodies with continuously-distributed dislocations are modeled by \emph{Weitzenb\"ock manifolds} $(\M,\g,\nabla)$, where $\nabla$ is a flat connection consistent with $\g$ (that is, a metric connection), whose torsion tensor represents the distribution of the dislocations \cite{Nye53,BBS55,Kro81}; see also the more recent  literature \cite{MR02,YG12b}.

The modeling of a body with distributed dislocations by a Weitzenb\"ock manifold is phenomenological, rather than mechanical. 
In particular, it is not associated with a class of constitutive relations (or elastic energies), and it is not clear how does $\nabla$ manifest (if at all)  in the response to deformation and loading (as pointed out in Section 1a in \cite{Wan67}).
This is in contrast to bodies with finitely many dislocations, which can be modeled as (singular) Riemannian manifolds with no additional structure (no torsion field), and for which standard elastic energies are applicable \cite{MLAKS15}.

There exists a vast literature on the mechanics of bodies with dislocations. However, those typically either use different phenomenological models for describing the dislocations (see e.g. \cite{CGO15}), or assume general classes of elastic energies that may or may not relate to the connection (e.g. \cite{CK13}, which does not use Weitzenb\"ock manifolds explicitly, however their choice of crystalline structure is equivalent to a choice of a flat connection), or only rely on the Riemannian part when considering mechanical response (\cite{YG12b}).

In \cite{KM15} and \cite{KM15b} we showed that Weitzenb\"ock manifolds (with non-zero torsion) can be obtained as rigorous limits of (torsion-free) singular Riemannian manifolds. 
Thus, the phenomenological model of a body with continuously distributed dislocations is a limit of bodies with finitely-many singular dislocations, as the density of the dislocations tends to infinity.
This new notion of converging manifolds calls for a rigorous derivation of a mechanical model for bodies with continuously-distributed dislocations:
Assuming a mechanical model for bodies with finitely-many singular dislocations,  is there a limiting mechanical model for  the limiting Weitzenb\"ock manifolds? 

The main question addressed in this paper is the following: Given a sequence of converging manifolds endowed with elastic energies depending continuously (in a precise sense) on the metric structure, what can be said about the $\Gamma$-limit of these energies?
To minimize technicalities, we consider systems free of external forces or constraints (note that the non-Euclidean structure renders such systems non-trivial). Body forces and boundary conditions can be included, if needed, in a standard way (see the Discussion section).
 
Our main result (Theorem \ref{thm:Gamma_convergence_general}) can be summarized as follows: 
\begin{quote}
{\em
Let $(\M_n,\g_n)$ be a sequence of body manifolds, with corresponding elastic energy densities $W_{(\M_n,\g_n)}$ satisfying boundedness and coercivity conditions, and depending continuously on the metric $\g_n$ (see Section~\ref{sec:energy_of_families} for a precise definition). If $(\M_n,\g_n)\to (\M,\g)$ uniformly (see Definition~\ref{def:uniform_convergence}), then the elastic energies $\Gamma$-converge to the relaxation of an energy with density $W_{(\M,\g)}$; if $(\M_n,\g_n)\to (\M,\g)$ in a weaker sense (see Definition \ref{def:weak_convergence}), then the relaxation of $W_{(\M,\g)}$ is an upper-bound to every $\Gamma$-convergent subsequence.
}
\end{quote}

As mentioned above,  it is shown in \cite{KM15, KM15b} that any 2D Weitzenb\"ock manifold $(\M,\g,\nabla)$ can be obtained as a limit of bodies with finitely many dislocations $(\M_n,\g_n,\nabla_n)$, where $\nabla_n$ is the Levi-Civita connection. The convergence of the Riemannian part, $(\M_n,\g_n)\to(\M,\g)$, is with respect to the weaker notion of convergence. Yet, a slight modification of our construction yields uniform convergence.

For a Weitzenb\"ock manifold $(\M,\g,\nabla)$ to constitute an adequate elastic model for a body with distributed dislocations, one would expect to have an elastic energy $E_{(\M,\g,\nabla)}$ associated with it. 
Since $(\M,\g,\nabla)$ is an effective limit model of bodies with finitely many defects, $E_{(\M,\g,\nabla)}$ should be a limit of the energies associated with these bodies. 
In the case where $\nabla=\nabla^{LC}$ is the Levi-Civita connection, the body has no continuously-distributed dislocations, so it is natural to choose $E_{(\M,\g,\nabla^{LC})}=E_{(\M,\g)}$, where $E_{(\M,\g)}$ is a standard non-Euclidean elastic energy (say, with density $\dist^p(du, \SO{\g,\euc})$). 
Our analysis shows that in this case $E_{(\M,\g,\nabla)}$ (or more accurately, its relaxation) would be independent of $\nabla$ even if it is not the Levi-Civita connection (and thus contains torsion).

This paper is concerned with isotropic materials, in which the elastic energy is derived from the Riemannian metric of a body manifold (the reference metric), which is fixed. 
In other models, involving anisotropy or defect dynamics, the connection $\nabla$ (or equivalently its torsion field) can still play a role in a limit energy functional. 
This lies outside the scope of this paper, and it is a natural topic for further research.

In addition, our main theorem implies the \emph{structural stability} of non-Euclidean elasticity under certain perturbations of the reference metric, as well as the convergence of certain approximation methods, based on locally-Euclidean approximations of body manifolds. 
These applications are elaborated in the discussion (Section \ref{sec:discussion}).

The structure of the paper is as follows: In Section \ref{sec:settings}, we define notions of convergence for body  manifolds, and define an $L^p$-topology for functions defined on converging manifolds.
In Section \ref{sec:energy_of_families}, we define a class of elastic energy functionals for configurations of body manifolds,  and prove, in particular, that the energy densities $\dist^p(\cdot, \SO{\g,\euc})$ belong to this class. While the definitions in this section are straightforward, it is the first time (to the best of our knowledge) 
that a convergence analysis relies on a precise quantitative relation between the metric structure and the elastic energy density.
In Section \ref{sec:Gamma_conv} we state and prove the main $\Gamma$-convergence result, and in Section \ref{sec:discussion} we discuss applications and limitations of our results, as well as some open questions.

\section{Settings}
\label{sec:settings}

\subsection{Definitions and notations}

Let $(V,\g)$ and $(W,\h)$ be two oriented $k$-dimensional inner-product spaces.
For a linear map $A:V\to W$ we denote by $|A|_\infty$ the operator norm of $A$, that is 
\[
|A|_\infty = \sup_{0\ne v\in V} \frac{|A(v)|_\h}{|v|_\g},
\]
and by $|A|_2=\tr (A^TA)$ the inner-product (Frobenius) norm induced by $\g$ and $\h$ .
Note that
\beq
\label{eq:equiv_of_norms}
|A|_\infty \le |A|_2 \le k\, |A|_\infty.
\eeq
When the exact norm is irrelevant or clear from the context, we simply write $|A|$.

We denote by $\dist_{\g,\h}$ (resp. $\dist^\infty_{\g,\h}$) the distance function on $L(V,W)$ with respect to the inner-product (resp. operator) norm induced by $\g$ and $\h$. 
We extend it to subsets of $L(V,W)$ as a Hausdorff distance. 

We denote by $\SO{\g,\h}$ the set of inner-product and orientation-preserving isomorphisms $(V,\g) \to (W,\h)$.
The \emph{distortion} of a map $A\in L(V,W)$ is defined as 
\beq
\label{eq:def_local_dis}
\DIS A = \dist_{\g,\h}(A,\SO{\g,\h}).
\eeq

All the above is extended to vector bundles equipped with inner-products in the standard way.
If $A$ is orientation preserving, and $\sigma_1,\ldots,\sigma_k$ are the singular values of $A$, then $\DIS A = \sqrt{(\sigma_1-1)^2+\ldots +(\sigma_k-1)^2}$.

Throughout the paper, we consider derivatives of maps $F:\M\to \N$ in the following way: Pointwise, for every $p\in \M$, we consider $(dF)_p : T_p\M \to T_{F(p)}\N$ as a map between vector spaces.  Globally, $dF$ is considered as a map $T\M \to F^*T\N$, where $F^*T\N$ is a vector bundle over $\M$, with the fiber $(F^*T\N)_p$ identified with the fiber $T_{F(p)}\N$. This way $dF$ is a bundle map over $\M$, thus separating its linear part  from its nonlinear part (the projection of $dF$ on the base space). 
Likewise, we denote by $F^*$ the pullback of tensor fields (such as Riemannian metrics), considered as sections of tensor products of $T\N$ and $T^*\N$. 
This should not be confused with the closely related pullback involving composition with $dF$, which we denote by $F^\star$. For example, if $\h$ is a Riemannian metric on $\N$, then $F^*\h$ is an inner product on the vector bundle $F^*T\N$, whereas $F^\star\h$ is an inner product on $T\M$ (hence a Riemannian metric on $\M$, unlike $F^*\h$), which is defined by
\[
F^\star\h(v,w) = F^*\h(dF(v),dF(w)),
\]
for every two vector fields $v,w\in \Gamma(T\M)$, whereas for every $p\in \M$ we have, 
\[
F^*\h_p(dF_p(v_p),d_pF(w_p)) = \h_{F(p)}(dF_p(v_p),d_pF(w_p)).
\]

\subsection{Body manifolds and their morphisms}

Body manifolds are a general notion  in mechanics, whose precise definition depends on the specific context.
In this section we define the class of manifolds to which our results refer. 
Since we are interested in bodies with defects, 
our concept of body manifold allows for singularities, which implies that we cannot require a smooth structure on the entire manifold.

\begin{definition}
A \Emph{body manifold} is a quadruple $(\M,d,\tM,\g)$, where $\M$ is a $k$-dimensional compact, oriented, connected topological manifold with corners and $d$ is a distance function on $\M$.  $\tM\subset\M$ is an open smooth submanifold, such that $\M\setminus\tM$ has a $k$-dimensional Hausdorff measure zero with respect to $d$. $\g$ is a Riemannian metric on $\tM$, consistent with the distance $d$ in the following sense: for every $p,q\in \M$, $d(p,q)$ is the infimum over the lengths 
\[
\len(\gamma) = \int_I \sqrt{\g(\dot{\gamma}(t), \dot{\gamma}(t))}\, dt.
\] 
of continuous paths $\gamma:I\to\M$ that are a.e. smooth. In particular,  $\gamma(t)\in\tM$ for all $t$ except perhaps for a set of measure zero.
\end{definition}

The consistency between $\g$ and $d$ ensures that there are no ``shortcuts" through the non-smooth parts of the body, i.e. that the Riemannian metric induces the distance.
Note also that the Riemannian metric induces a measure on $\tM$---the volume form. This measure can be extended into a measure $\mu$ on $\M$ by setting $\mu(\M\setminus\tM)=0$. Since $d$ and $\g$ are consistent,  the null sets of $\mu$ coincide with the null sets of the $k$-dimensional Hausdorff measure.

\begin{examples}\
\begin{enumerate}
\item
The trivial example: Every compact, oriented, connected Riemannian manifold with corners $(\M,\g)$ is a body manifold with $\tM = \M$ and $d$ induced by the Riemannian metric.

\item
A cone is a body manifold: it is a two-dimensional topological manifold hemeomorphic to a disc, endowed with a locally Euclidean metric everywhere but at one point---the tip of the cone. In the mechanical context, a cone is a  disclination-type defect.

\item
Every piecewise-affine manifold is a body manifold. The smooth component $\tM$ may be disconnected. Piecewise-affine manifolds are prevalent in mechanics in the context of numerical approximations. 
\end{enumerate}
\end{examples}

We now define morphisms between body manifolds: 
these are bi-Lipschitz homeomorphisms that are local diffeomorphisms whenever the differential is defined (the smooth parts need not be diffeomorphic).

\begin{definition}
Let $(\M,d_\M,\tM,\g_\M)$ and $(\N,d_\N,\tilde{\N},\g_\N)$ be body manifolds. A \textbf{morphism} between those manifolds is a bi-Lipschitz homeomorphism $F:\M\to\N$, such that the restriction of $F$ to $\tM\cap F^{-1}(\tilde{\N})$ (which is a set of full measure, since $F^{-1}$ is Lipschitz) is a smooth embedding.
\end{definition}

\begin{examples}\
\begin{enumerate}
\item
Every diffeomorphism between Riemannian manifolds is a body manifold morphism.

\item
A cone can be parametrized by polar coordinates, $(r,\theta)$, with a metric whose components 
\[
\g(r,\theta) = \mymat{1 & 0 \\ 0 & \alpha^2 r^2},
\qquad 0 < \alpha \ne1,
\] 
are defined for every $r>0$. The identity map into a Euclidean disc is a body manifold morphism. Note that the smooth parts of the cone and the disc are not diffeomorphic.

\item
Maps from smooth Riemannian manifolds to piecewise-affine approximations are body manifold morphisms.
\end{enumerate}
\end{examples}

Elasticity is concerned with material response to distortions. In our context, where a body has a two metric structure---a distance function and a Riemannian metric---we distinguish between local and global distortions of body manifold morphisms:

\begin{definition}
\label{def:distortions}
Let $(\M,d_\M,\tM,\g_\M)$ and $(\N,d_\N,\tilde{\N},\g_\N)$ be body manifolds and let $F:\M\to\N$ be a morphism. The \Emph{local distortion} of $F$ is the distortion of the linear map $dF$ as defined in \eqref{eq:def_local_dis}, i.e., it is the map $\DIS dF: \tM\cap F^{-1}(\tilde{\N})\to[0,\infty)$,
\[
\DIS dF = \dist_{\g_\M,F^*\g_\N}(dF,\SO{\g_\M,F^*\g_\N}).
\]
The \Emph{global distortion} of $F$ is a non-negative number defined as 
\[
\dis F = \sup_{p,q\in\M} |d_\M(p,q) - d_\N(F(p),F(q))|.
\]
\end{definition}

\subsection{Convergence of body manifolds}
\label{sec:conv_of_manifolds}

In this section we define two modes of convergence for body manifolds, which, loosely speaking, correspond to uniform and non-uniform convergence of the Riemannian metrics.

\begin{definition}[Uniform convergence of body manifolds]
\label{def:uniform_convergence}
Let $(\M_n,d_n,\tM_n,\g_n)$ be a sequence of body manifolds and let $(\M,d,,\tM,\g)$ be a body manifold.
We say that the sequence $\M_n$ \textbf{converges uniformly} to $\M$, if there exists a sequence of  body manifold morphisms $F_n:\M \to \M_n$, such that the local distortion vanishes uniformly,
\beq
\label{eq:uniform_asymptotic_rigidity}
\limn \|\DIS dF_n\|_\infty = 0.
\eeq
\end{definition}

\begin{definition}[Mean convergence of body manifolds]
\label{def:weak_convergence}
Let $(\M_n,d_n,\tM_n,\g_n)$ be a sequence of body manifolds and let $(\M,d,\tM,\g)$ be a body manifold.
We say that the sequence $\M_n$ \textbf{converges in the mean} to $\M$, if there exists a sequence of  body manifold morphisms $F_n:\M \to \M_n$,  such that
\begin{enumerate}
\item $F_n$ are uniformly bi-Lipschitz, i.e. there exists a constant $C>0$, independent of $n$, such that
\beq
\label{eq:uni_bi_Lipschitz}
|(d F_n)_p| , |((dF_n)_p)^{-1}| < C,
\eeq
for every $p\in \M$ where $dF_n$ is defined.
(Note that $(dF_n)^{-1} = F_n^*(d(F_n^{-1}))$.)
\item $F_n$ are approximate distance-preserving as maps between metric spaces: the global distortion vanishes asymptotically,
\beq
\label{eq:vanishing_dis}
\limn\dis F_n = 0.
\eeq
\item $F_n$ are asymptotically rigid in the mean:
\beq
\label{eq:asymptotic_rigdity}
\limn \int_\M \DIS dF_n \,\Volume = 0.
\eeq

\item The volume forms converge uniformly:
\beq
\label{eq:volume_conv}
\frac{d\text{Vol}_{F_n^\star \g_n}}{\Volume} \to 1 \qquad \text{in} \,\, L^\infty.
\eeq
\end{enumerate}
\end{definition}

To simplify notations, we will denote the body manifolds $(\M_n,d_n,\tM_n,\g_n)$ and $(\M,d,\tM,\g)$
 by $\M_n$ and $\M$, whenever no confusion should arise.
 
These definitions, and especially the definition of mean convergence, may seem a bit convoluted, so we first provide the rationale behind them.
As our main motivation for this work is the convergence of bodies with dislocations, we consider notions of convergence that (i) are satisfied by converging bodies with dislocations considered in \cite{KM15,KM15b} (further details are given in the examples section below); and (ii) are strong enough to imply the $\Gamma$-convergence of associated elastic energies.

The crux in each type of convergence is the way $\DIS F_n$ converges to zero. When the convergence is in $L^\infty$ (uniform convergence), it follows automatically that $F_n$ are uniformly bi-Lipschitz and that the volume forms converge uniformly; these properties are needed for our $\Gamma$-convergence proof. 
When $\DIS F_n\to 0$ only in $L^1$ (mean convergence) both the uniform by-Lipschitz property and volume convergence are not guaranteed, hence have to be imposed explicitly, as Conditions \eqref{eq:uni_bi_Lipschitz} and \eqref{eq:volume_conv} (which are satisfied by our main examples, see below).
Future improvements of the $\Gamma$-convergence proof may allow to relax these conditions.

Condition \eqref{eq:vanishing_dis} is of ``global" nature, and unlike the other conditions, does not involve the differentials $dF_n$ explicitly. Furthermore,
it plays no explicit role in the $\Gamma$-convergence proof; its  role is to ``enforce" Gromov-Hausdorff convergence (see below), and as a result, the uniqueness of the limit (a limit body independent of the mappings $F_n$).
It is possible that the other conditions in Definition \ref{def:weak_convergence} suffice for a unique limit, in which case Condition \eqref{eq:vanishing_dis} can be omitted. This is, however, a pure question of geometric rigidity, and it is beyond the scope of this paper. It is further discussed in the open questions part of Section \ref{sec:discussion}.

In the rest of this subsection we prove some properties of convergent sequences, and give some examples.

\begin{lemma}\label{lm:every_p_rigidity}
If $\M_n$ converges to $\M$ in the mean, and $F_n:\M\to\M_n$ are maps that realize the convergence, then for every $p\in[1,\infty)$
\beq
\label{eq:asymptotic_rigdity_p}
\limn \int_\M (\DIS dF_n)^p \,\Volume = 0,
\eeq
and
\beq
\label{eq:reverse_asymptotic_rigdity}
\limn \int_{\M_n}(\DIS dF^{-1}_n)^p \,\Volumen = 0.
\eeq
\end{lemma}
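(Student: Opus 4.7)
The plan is to leverage the uniform bi-Lipschitz bound \eqref{eq:uni_bi_Lipschitz} to interpolate between the $L^1$ rigidity \eqref{eq:asymptotic_rigdity} and an $L^\infty$ bound on $\DIS dF_n$, yielding $L^p$ rigidity for every $p\in[1,\infty)$. The second statement will then follow by a change of variables, using a pointwise comparison between $\DIS dF_n$ and $\DIS d(F_n^{-1})$ together with the uniform volume-form convergence \eqref{eq:volume_conv}.

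For \eqref{eq:asymptotic_rigdity_p}, I first observe that by \eqref{eq:uni_bi_Lipschitz}, the singular values $\sigma_1,\ldots,\sigma_k$ of $dF_n$ (with respect to $\g$ and $F_n^*\g_n$) lie in the fixed interval $[1/C,C]$ on the full-measure set where $dF_n$ is defined. Since a morphism is an orientation-preserving smooth embedding on this set (with positive Jacobian guaranteed by \eqref{eq:volume_conv}), the identity $\DIS dF_n = \sqrt{\sum_i(\sigma_i-1)^2}$ applies, yielding a uniform bound $\|\DIS dF_n\|_\infty \le K$ for some $K$ depending only on $C$ and $k$. For any $p\ge 1$ I then estimate
\[
\int_\M (\DIS dF_n)^p \,\Volume \;\le\; K^{p-1} \int_\M \DIS dF_n \,\Volume,
\]
and the right-hand side tends to zero by \eqref{eq:asymptotic_rigdity}.

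For \eqref{eq:reverse_asymptotic_rigdity}, I write $G_n = F_n^{-1}:\M_n\to\M$, so that at a.e.\ point $F_n(p)$ the differential $dG_n$ has singular values $1/\sigma_1,\ldots,1/\sigma_k$ (with respect to $F_n^*\g_n$ and $\g$). Hence
\[
(\DIS dG_n)\circ F_n \;=\; \sqrt{\sum_i(1/\sigma_i-1)^2} \;=\; \sqrt{\sum_i(\sigma_i-1)^2/\sigma_i^2} \;\le\; C\,\DIS dF_n,
\]
since $\sigma_i \ge 1/C$. Then, applying the bi-Lipschitz change of variables along $F_n$,
\[
\int_{\M_n}(\DIS dG_n)^p\,\Volumen \;=\; \int_\M \bigl((\DIS dG_n)\circ F_n\bigr)^p\,\frac{d\text{Vol}_{F_n^\star\g_n}}{\Volume}\,\Volume \;\le\; C^p\,\Bigl\|\tfrac{d\text{Vol}_{F_n^\star\g_n}}{\Volume}\Bigr\|_\infty \int_\M(\DIS dF_n)^p\,\Volume,
\]
and both factors on the right are controlled: the Jacobian ratio is uniformly bounded by \eqref{eq:volume_conv}, and the integral vanishes by the first part.

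There is essentially no deep obstacle. The only points requiring care are (a) justifying the singular-value representation of $\DIS dF_n$, which relies on $\det dF_n >0$ a.e.\ (ensured by \eqref{eq:volume_conv} combined with $F_n$ being a morphism), and (b) verifying the bi-Lipschitz change-of-variables identity on the full-measure smooth subdomain, which is standard given the uniform Lipschitz bounds in \eqref{eq:uni_bi_Lipschitz} and the consistency between the Riemannian volume and the Hausdorff measure on body manifolds.
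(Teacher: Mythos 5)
Your proof is correct and follows essentially the same strategy as the paper's: the uniform bi-Lipschitz bound \eqref{eq:uni_bi_Lipschitz} gives an $L^\infty$ bound on $\DIS dF_n$ that boosts the $L^1$ rigidity \eqref{eq:asymptotic_rigdity} to $L^p$, and the second statement follows by change of variables along $F_n$ together with the uniform volume-form convergence \eqref{eq:volume_conv}. Your direct pointwise estimate $(\DIS dF_n^{-1})\circ F_n \le C\,\DIS dF_n$, via the singular values of $dF_n$ being bounded below by $1/C$, is a somewhat cleaner route than the paper's $\e$-set decomposition (which works on $p=1$ and then invokes bounded convergence); both, however, implicitly use that $dF_n$ is orientation-preserving a.e.\ for $n$ large, which as you note is forced by the smallness of $\DIS dF_n$.
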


\begin{proof}
Since 
\[
\DIS dF_n \le |dF_n|_2 + k, 
\]
and since $|dF_n|_2$ is uniformly bounded by  \eqref{eq:uni_bi_Lipschitz}, it follows from the Bounded Convergence Theorem that $L^1$-convergence \eqref{eq:asymptotic_rigdity} implies $L^p$-convergence 
\eqref{eq:asymptotic_rigdity_p}.

Similarly, it is enough to prove \eqref{eq:reverse_asymptotic_rigdity} for $p=1$.
It follows from \eqref{eq:asymptotic_rigdity} that for every $\e>0$ there exist sets $A_n\subset \M$ 
whose complements have asymptotically vanishing volume, $\textVol(\M\setminus A_n) \to 0$, 
in which $\DIS dF_n < \e$. 
It follows that the singular values of $dF_n$ with respect to the frame are in the interval $(1-\e,1+\e)$, hence all the singular values of $(dF_n)^{-1}$ are in the interval $((1+\e)^{-1},(1-\e)^{-1})\subset (1-2\e,1+2\e)$, from which follows that for every point in $A_n$,
\beq
\label{eq:rigidity_in_A_n}
F_n^* \DIS dF_n^{-1}  < 2\e\cdot \sqrt{k}.
\eeq
From \eqref{eq:rigidity_in_A_n} and the uniform bound \eqref{eq:uni_bi_Lipschitz}, it follows that
\[
\begin{split}
\int_\M F_n^* \DIS dF_n^{-1} \, \Volume &\le
2\sqrt{k}\e \textVol(A_n) + C\textVol(\M\setminus A_n) \\
&\le 2\sqrt{k}\textVol(\M) \e + o(1) 
\end{split}
\qquad\text{as $n\to \infty$.}
\]
Since $\e$ is arbitrary,
\[
\limn \int_\M  F_n^* \DIS dF_n^{-1}
\, \Volume = 0.
\]
Using the uniform convergence of the volume \eqref{eq:volume_conv},
\[
\limn \int_\M  F_n^* \DIS dF_n^{-1}
\, \VolumeFn = 0,
\]
from which \eqref{eq:reverse_asymptotic_rigdity}  for $p=1$ follows by a change of variables.
\end{proof}

\subsubsection{Relations to other modes of convergence}
\label{sec:relations_between_convergence_notions}

\begin{enumerate}
\item Uniform convergence is stronger than mean convergence. Indeed, \eqref{eq:uniform_asymptotic_rigidity} implies \eqref{eq:uni_bi_Lipschitz} and \eqref{eq:asymptotic_rigdity}. 
Uniform convergence of volumes \eqref{eq:volume_conv} follows from the inequality
\[
\left|\frac{\VolumeFn}{\Volume}-1\right| \le (\DIS dF_n + 1)^k - 1,
\]
see Lemma 4.5 in \cite{KM15} for details.
Finally, let $\gamma$ be a curve whose lengths $\ell_\g(\gamma)$ and $\ell_{F_n^\star\g_n}(\gamma)$ with respect to $\g$ and $F_n^\star\g_n$ are well-defined. 
The uniform convergence \eqref{eq:uniform_asymptotic_rigidity} implies that $|\ell_\g(\gamma)-\ell_{F_n^\star\g_n}(\gamma)|\to 0$ over all such curves. 
Moreover, for every $R>0$, this convergence is uniform over all curves of length less or equal $R$. 
This implies uniform convergence of the distances $d_{F_n^\star\g_n}\to d_{\g}$ (the distance functions induced on $\M$ by the Riemannian metrics $F_n^\star\g_n$ and $\g$). Since $d_{\g} = d$ and $d_{F_n^\star\g_n}$ is the pullback by $F_n$ of $d_n$, this implies the asymptotic vanishing \eqref{eq:vanishing_dis} of the global distortion.

\item Both types of convergence are weaker than $(m,\alpha)$-H\"older convergence of smooth manifolds, for any $m\ge 0$ and $\alpha\in (0,1)$.
Indeed, by definition, $(\M_n,\g_n)\to (\M,\g)$ in the $C^{m,\alpha}$-topology if there exists diffeomorphisms $F_n:\M\to \M_n$ such that $F_n^\star \g_n\to \g$ in the $C^{m,\alpha}$-topology, i.e., the components of the metric converge in the $C^{m,\alpha}$-topology in any coordinate chart (see \cite[Chapter 10]{Pet06} for details). 
This implies \eqref{eq:uniform_asymptotic_rigidity}, hence uniform convergence.
	
Thus, all the results presented in this paper apply \emph{a fortiori} to H\"older-converging manifolds.
	
\item Mean convergence of $\M_n$ to $\M$ implies measured Gromov-Hausdorff convergence of the \emph{measured metric spaces} $(\M_n,d_{\g_n},\textVol_{\g_n})$ to $(\M,d_\g,\textVol_{\g})$ (see \cite[Chapter 10]{Pet06} for details). 
Indeed, \eqref{eq:vanishing_dis} implies Gromov-Hausdorff convergence, and \eqref{eq:volume_conv} implies weak convergence of the measures $\textVol_{F_n^\star\g_n}$ to $\textVol_{\g}$.
\end{enumerate}

\subsubsection{Examples}
\label{sec:Remarks_and_Examples}

\begin{enumerate}
\item The convergence defined in \cite{KM15,KM15b} in the context of distributed edge-dislocations is weaker than mean convergence, but on the other hand, it also embodies the convergence of an additional structure---a flat metric connection. In the terminology of the present paper, \cite{KM15,KM15b} deal with the convergence of quintuples $(\M,d,\tM,\g,\nabla)$, where $\nabla$ is a flat metric connection on $\tM$.
The explicit sequences of manifolds constructed in \cite{KM15,KM15b} exhibit mean convergence (see Propositions~1 and 3 in \cite{KM15}).
Therefore, the main theorem in \cite{KM15b} implies that generic smooth, $2$-dimensional surfaces can be obtained as mean convergence limits of locally-Euclidean surfaces with distributed edge-dislocations.

\item The constructions in \cite{KM15,KM15b} are composed from building blocks, each containing a pair of disclinations of opposite charge, as illustrated in \figref{fig:dislocation}.
In the $n$-th stage, the two disclinations in each building block have angles $\pm2\theta$, i.e., independent of $n$ and identical in all blocks, whereas the distance $d$ between the disclinations is of order $1/n^2$. This construction yields in each block a dislocation of magnitude $2d\,\sin\theta \sim 1/n^2$.

\begin{figure}
\begin{center}
\begin{tikzpicture}[scale=1.2]
	\tkzDefPoint(0,4){A}
	\tkzDefPoint(0,2.5){A1}
	\tkzDefPoint(2,2.5){pm}
	\tkzDefPoint(4,2.5){dummy}
	\tkzDefPoint(3,2.25){mid}
	\tkzDefPoint(4,2){pp}
	\tkzDefPoint(6,2){D1}
	\tkzDefPoint(6,4){D}
	\tkzDrawPoints(A,A1,pm,pp,D1,D);
	\tkzLabelPoint[left](A1){$x$};
	\tkzLabelPoint[right](D1){$y$};
	\tkzLabelPoint[above](pm){$p_-$};
	\tkzLabelPoint[above](pp){$p_+$};
	\tkzLabelPoint[below](mid){$d$};
	\tkzDrawSegment(A,D);
	\tkzDrawSegment(D,D1);
	\tkzDrawSegment(A,A1);
	\tkzDrawSegment(A1,pm);
	\tkzDrawSegment(pp,pm);
	\tkzDrawSegment(pp,D1);
	\tkzDrawSegment[dashed](pm,dummy);
	\tkzLabelAngle(dummy,pm,pp){{\small $\theta$}};
	\tkzMarkAngle[size=1.2](pp,pm,dummy);
	\tkzMarkSegment[mark=s|](A1,pm)
	\tkzMarkSegment[pos=0.75, mark=|||](pm,pp)
	\tkzMarkSegment[mark=s||](pp,D1)

	\tkzDefPoint(0,-1){B}
	\tkzDefPoint(0,0.5){B1}
	\tkzDefPoint(2,0.5){pm}
	\tkzDefPoint(4,0.5){dummy}
	\tkzDefPoint(3,0.75){mid}
	\tkzDefPoint(4,1){pp}
	\tkzDefPoint(6,1){C1}
	\tkzDefPoint(6,-1){C}
	\tkzDrawPoints(B,B1,pm,pp,C1,C);
	\tkzLabelPoint[left](B1){$x$};
	\tkzLabelPoint[right](C1){$y$};
	\tkzLabelPoint[above](pm){$p_-$};
	\tkzLabelPoint[above](pp){$p_+$};
	\tkzLabelPoint[above](mid){$d$};
	\tkzDrawSegment(B,C);
	\tkzDrawSegment(C,C1);
	\tkzDrawSegment(B,B1);
	\tkzDrawSegment(B1,pm);
	\tkzDrawSegment(pp,pm);
	\tkzDrawSegment(pp,C1);
	\tkzDrawSegment[dashed](pm,dummy);
	\tkzLabelAngle(dummy,pm,pp){{\small $\theta$}};
	\tkzMarkAngle[size=1.2](dummy,pm,pp);
	\tkzMarkSegment[mark=s|](B1,pm)
	\tkzMarkSegment[pos=0.75, mark=|||](pm,pp)
	\tkzMarkSegment[mark=s||](pp,C1)
\end{tikzpicture}
\end{center}
\caption{A single edge-dislocation realized as a dipole of disclinations at $p_-$ and $p_+$, by gluing the segments $[x,p_-]$, $[p_-,p_+]$ and $[p_+,y]$ in the upper polygon to the matching segments in the lower polygon. The disclination angle is $2\theta$ and the distance between the dislocations is $|[p_-,p_+]|=d$, yielding a dislocation magnitude (identified with the size of the Burgers vector) $2d\sin(\theta)$.}
\label{fig:dislocation}
\end{figure}
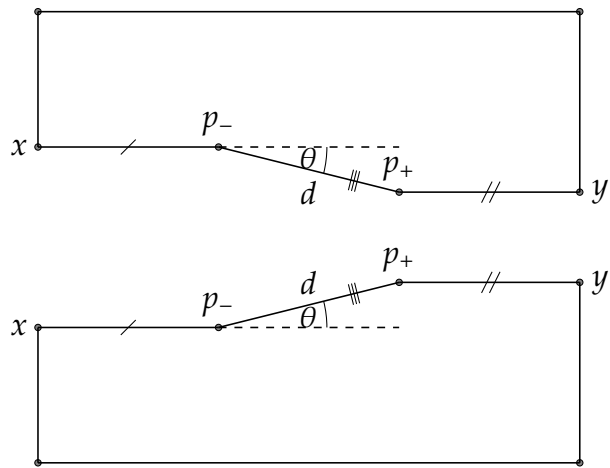

These constructions yield sequences of body manifolds that do not converge uniformly, but do converge in the mean. The lack of uniform convergence stems from the fact that the disclination angles  do not vanish as $n$ tends to infinity. When mapping the manifolds $\M_n$ into the limit manifold $\M$, one has to map curves such as $xp_-p_+y$ in \figref{fig:dislocation} to smooth curves. This always results in asymptotically small areas where $dF_n$ is bounded away from being a rigid transformation.

A slight modification of the constructions in \cite{KM15,KM15b} yields a sequence of locally-Euclidean surfaces with distributed edge-dislocations that converges {\em uniformly} to a smooth two-dimensional surface. For that, one has to take the angle $\theta$ in each building block to be of order $1/n^\e$ for some small $\e$, and set the distance $d$ between the disclinations such that the dislocation magnitude is the same as in the original construction (hence $d$ is of order $1/n^{2-\e}$). This construction yields the same limit as the original construction.

While it can be argued that vanishing disclination angles are ``less physical" than fixed ones (especially in the context of crystalline solids), this shows that any smooth surface with a continuous distribution of dislocations $(\M,d,\M,\g,\nabla)$ (since $\M$ is smooth $\tM=\M$) can be approximated {\em uniformly} by surfaces with finitely many dislocations  $(\M_n,d_n,\tM_n,\g_n,\nabla_n)$, where $\nabla_n$ is the Levi-Civita connection.

\item Another example of uniform convergence is the convergence of approximations of a surface via Euclidean triangulations: 
Any given surface can be triangulated by geodesic triangles whose edge-lengths are of order $1/n$ and whose angles are bounded away from $0$ and $\pi$. 
For $n$ large enough, each such triangle can be replaced by a Euclidean triangle of the same edge lengths. This yields a surface having disclination-type singularities at the vertices, while being locally Euclidean everywhere else.
As $n$ tends to infinity, these singular, locally Euclidean surfaces converge uniformly to the original surface.
Higher dimensional analogues to this construction are also possible.

\item
As an example of a sequence of manifolds converging to a smooth manifold in a weak sense, but not in the mean (and therefore neither uniformly), one can take any sequence of Riemannian manifolds $(\M_n,\g_n)$ that converges to $(\M,\g)$ while $\lim_n \textVol(\M_n) \ne \textVol(\M)$;
there are many such examples in the literature (see e.g. \cite{Iva98}). 

An example relevant to the homogenization of defects is the convergence of bodies with increasingly dense point-defects, as in \cite{KMR15}. There, $\lim_n \textVol(\M_n) > \textVol(\M)$.
In this example the maps $F_n:\M\to \M_n$ are far from being rigid, as $\DIS dF_n$ is uniformly bounded away from zero almost everywhere. 
Thus, the homogenization of point-defects does not fall under the framework of this paper.
\end{enumerate}

\subsection{Convergence of maps on converging manifolds}
\label{sec:L_p_top}

Having two notions of convergence for body manifolds, we proceed to define a topology for maps $f_n:\M_n\to\R^k$. 

\begin{definition}\label{def:emb_conv}
Let $\M_n$ be a sequence of body manifolds converging to a body manifold $\M$ (either uniformly or in the mean), and let $F_n: \M\to \M_n$ be body manifold morphisms that realize the convergence.
We say that a sequence $u_n\in L^p(\M_n;\R^k)$ converges to $u\in L^p(\M;\R^k)$ in $L^p$ (relative to $F_n$) if
\[
\| u_n \circ F_n - u\|_{L^p(\M;\R^k)} \to 0.
\]
\end{definition}

Note that this convergence depends on the maps $F_n$, which means that we do not have a general notion of convergence of sequences in $L^p(\M_n;\R^k)$ to a limit in $L^p(\M;\R^k)$. This convergence induces a natural topology on the disjoint union $(\sqcup_n L^p(\M_n;\R^k) \sqcup L^p(\M;\R^k)$; see \cite{KS08} for details. In the terminology of \cite{KS08}, we defined an \emph{asymptotic relation} between $L^p(\M_n;\R^k)$ and $L^p(\M;\R^k)$, since the sequence $F_n$ also realizes measured Gromov-Hausdorff convergence, as stated in the third item in \secref{sec:relations_between_convergence_notions}.

The following lemma establishes standard properties of $L^p$-convergence, adapted to converging manifolds:

\begin{lemma}\
\label{lm:L_p_convergence}
\begin{enumerate}
\item If $u_n\to u$ in $L^p$, then $u_n$ is a bounded sequence in $L^p(\M_n;\R^k)$, namely, $\| u_n\|_{L^p(\M_n;\R^k)}$ is bounded.

\item
If $u_n$ is bounded in $W^{1,p}(\M_n;\R^k)$ (i.e. $\| u_n\|_{W^{1,p}(\M_n;\R^k)}$ is bounded), then $u_n\circ F_n$ is uniformly bounded in $W^{1,p}(\M;\R^k)$, and in particular admits a weakly $W^{1,p}$-convergent subsequence.
\end{enumerate}
\end{lemma}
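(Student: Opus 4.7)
Both claims are essentially change-of-variables computations, exploiting the uniform bi-Lipschitz property \eqref{eq:uni_bi_Lipschitz} of the maps $F_n$ and the uniform volume convergence \eqref{eq:volume_conv}. (In the uniform convergence case, both properties follow from \eqref{eq:uniform_asymptotic_rigidity} via the relations in \secref{sec:relations_between_convergence_notions}.) The plan is to pull everything back to the fixed manifold $\M$ via $F_n$, use that Jacobians and differentials are uniformly controlled, and then invoke standard facts about $W^{1,p}$.

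For the first claim, since $F_n:\M\to\M_n$ is a bi-Lipschitz homeomorphism that is a smooth embedding on a full-measure subset, the change-of-variables formula gives
\[
\int_{\M_n} |u_n|^p \,\Volumen = \int_\M |u_n\circ F_n|^p \,\VolumeFn = \int_\M |u_n\circ F_n|^p \,\frac{\VolumeFn}{\Volume}\,\Volume.
\]
By \eqref{eq:volume_conv}, the Jacobian ratio is uniformly bounded, so
\[
\|u_n\|_{L^p(\M_n;\R^k)}^p \le C\,\|u_n\circ F_n\|_{L^p(\M;\R^k)}^p.
\]
The hypothesis $u_n\circ F_n\to u$ in $L^p(\M;\R^k)$ implies boundedness of the right-hand side, hence of $\|u_n\|_{L^p(\M_n;\R^k)}$.

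For the second claim, the same computation (with the Jacobian ratio now appearing with opposite sign, again uniformly bounded) yields $\|u_n\circ F_n\|_{L^p(\M;\R^k)} \le C\,\|u_n\|_{L^p(\M_n;\R^k)}$. For the derivative, on the full-measure set $\tM\cap F_n^{-1}(\tM_n)$ we have $d(u_n\circ F_n) = (F_n^* du_n)\circ dF_n$, so pointwise
\[
|d(u_n\circ F_n)|_\g \le |dF_n|_\infty \cdot |F_n^* du_n|_{F_n^*\g_n}.
\]
By \eqref{eq:uni_bi_Lipschitz}, $|dF_n|_\infty$ is uniformly bounded, and another change of variables together with \eqref{eq:volume_conv} gives
\[
\int_\M |d(u_n\circ F_n)|^p \,\Volume \le C \int_{\M_n} |du_n|^p \,\Volumen.
\]
Combining these two bounds produces a uniform bound of $\|u_n\circ F_n\|_{W^{1,p}(\M;\R^k)}$ in terms of $\|u_n\|_{W^{1,p}(\M_n;\R^k)}$. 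Since $W^{1,p}(\M;\R^k)$ is reflexive for $1<p<\infty$, the Banach-Alaoglu theorem yields a weakly convergent subsequence.

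The only subtlety is technical: making sure the change-of-variables formula applies on a set of full measure despite the singular sets $\M\setminus\tM$ and $\M_n\setminus\tM_n$. This is not really an obstacle, because $F_n$ is bi-Lipschitz, the singular sets have measure zero, and $F_n^{-1}(\tM_n)\cap \tM$ has full measure in $\M$; on this set $F_n$ is a smooth embedding and all pointwise manipulations above make sense. Once this is observed, the proof is a routine verification.
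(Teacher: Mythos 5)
Your proof is correct and follows essentially the same route as the paper's: pull back via $F_n$, use the uniformly bounded Jacobian ratio (from \eqref{eq:uni_bi_Lipschitz} and/or \eqref{eq:volume_conv}) for the $L^p$-norms, and combine the pointwise chain-rule bound $|d(u_n\circ F_n)|\le |dF_n|\,|F_n^*du_n|$ with another change of variables for the gradient term; reflexivity of $W^{1,p}$ then gives the weakly convergent subsequence. The only cosmetic difference is that the paper phrases the reverse Jacobian bound via Hadamard's inequality applied to $d(F_n^{-1})$ rather than by directly invoking boundedness of $\Volume/\VolumeFn$, but these are the same observation.
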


\begin{proof}
It is enough to prove the lemma under the assumption that $\M_n\to \M$ in the mean. 
Let $u_n\to u$ in $L^p$. 
By the triangle inequality,
\[
\|u_n\circ F_n\|_{L^p(\M;\R^k)} \le \|u_n\circ F_n - u\|_{L^p(\M;\R^k)} + 
\|u\|_{L^p(\M;\R^k)}.
\]
The first term tends, by definition, to zero. Next,
\[
\begin{split}
\|u_n\|^p_{L^p(\M_n;\R^k)} &= \int_{\M_n} |u_n|^p\, \Volumen = \int_{\M} |u_n\circ F_n|^p \, \VolumeFn \\
		&= \int_{\M} |u_n\circ F_n|^p \, \frac{\VolumeFn}{\Volume} \Volume \\
		&\le \Norm{\frac{\VolumeFn}{\Volume}}_\infty \|u_n \circ F_n\|^p_{L^p(\M;\R^k)},
\end{split}
\]
hence
\[
\limsup_{n\to\infty} \|u_n\|_{L^p(\M_n;\R^k)} \le \|u\|_{L^p(\M;\R^k)},
\]
which proves the first part.

For the second part, assume that $u_n$ is bounded in $W^{1,p}(\M_n;\R^k)$. 
In particular, $u_n$ is bounded in $L^p(\M_n;\R^k)$.
The same calculation as above yields that $u_n\circ F_n$ is bounded in $L^p(\M;\R^k)$.
Moreover, 
\beq
\label{eq:W_1_p_weak_compactness}
\begin{split}
\|d(u_n\circ F_n)\|_p^p &= \int_M |d(u_n\circ F_n)|^p \, \Volume \le \int_M F_n^*|du_n|^p \cdot |dF_n|^p \, \Volume \\
&\le C \int_M F_n^*|du_n|^p \, \Volume = C \int_{\M_n} |du_n|^p \, d\text{Vol}_{(F_n)_\star \g} \\
&= C \int_{\M_n} |du_n|^p \, \frac{d\text{Vol}_{(F_n)_\star \g} }{\Volumen} \Volumen \\
&\le C\|du_n\|^p_{L^p(\M_n;\R^k)} \Norm{\frac{d\text{Vol}_{(F_n)_\star \g} }{\Volumen}}_\infty
\le C' \|du_n\|^p_{L^p(\M_n;\R^k)},
\end{split}
\eeq
where the norms of $du_n$, $dF_n$ and $d(u_n\circ F_n)$ at a point $p$ are in the space $L(T_p\M_n, \R^k)$, as described in Section \ref{sec:def_and_notation}.
Between the first and the second line we used the uniform Lipschitz continuity of $F_n$. In the passage to the last line we used the uniform Lipschitz continuity of $F_n^{-1}$, and the fact that if $G:(\N,\h)\to (\N',\h')$ is a smooth map between $k$-dimensional Riemannian manifolds ($G= F_n^{-1}$ in our case), then Hadamard's inequality (see Lemma 4.5 in \cite{KM15} for details) implies
\[
\frac{d\textVol_{G^\star\h'}}{\Volumeh} \le |dG|^d.
\]
Together with the boundedness of $u_n\circ F_n$ in $L^p(\M;\R^k)$, \eqref{eq:W_1_p_weak_compactness} implies that $u_n\circ F_n$ is bounded in $W^{1,p}(\M;\R^k)$, which completes the proof.
\end{proof}

\section{Energy functionals on families of manifolds}
\label{sec:energy_of_families}

\begin{definition}
\label{def:elastic_energy}
Let $\frakM$ be a class of body manifolds.
\begin{enumerate}
\item An \textbf{energy density} on $\frakM$ is a function
\[
W: \bigsqcup_{(\M,d,\tM,\g)\in\frakM} T^*\tM\otimes \R^k \to \R.
\]
We denote the restriction of $W$ to $(\M,d,\tM,\g)$ by $W_{(\M,\g)}$.
\item An energy density on $\frakM$ is called \textbf{$p$-regular} for $p\in(1,\infty)$, if the following holds:
\begin{enumerate}
\item Regularity: For every $(\M,d,\tM,\g)\in \frakM$, $W_{(\M,\g)}$ is a Carath\'eodory function; see Appendix A in \cite{KM14} for the definition of Carath\'eodory functions in Riemannian settings.

\item Uniform coercivity: There exist $\alpha,\beta>0$ such that for every $(\M,d,\tM,\g)\in \frakM$,
\beq
W_{(\M,\g)}(A) \ge \alpha |A|^p - \beta ,\qquad \forall A\in  T^*\tM\otimes \R^k.
\label{eq:coercivity}
\eeq

\item Uniform boundedness: There exists a $\gamma>0$ such that for every $(\M,d,\tM,\g)\in \frakM$,
\beq
W_{(\M,\g)}(A) \le \gamma (|A|^p + 1),\qquad \forall A\in  T^*\tM\otimes \R^k.
\label{eq:boundedness}
\eeq

\item Lipschitz continuity in the metric: 
There exists a $C>0$ such that for every $(\M,d,\tM,\g),(\N,d',\tilde{\N},\h)\in \frakM$, 
linear isomorphism $L: T\tM \to T\tilde{\N}$, and $A\in  T^*\tilde{\N}\otimes \R^k$
\beq
\label{eq:property_d_energy_density}
|W_{(\M,\g)}(A\circ L)^{1/p} - L^* W_{(\N,\h)}(A)^{1/p} | < C (1 + L^* |A|) \DIS L.		
\eeq
\end{enumerate}

\item Let $W$ be a $p$-regular energy density on $\frakM$. Its \textbf{associated energy functional} is the function 
\[
E : \bigsqcup_{(\M,d,\tM,\g)\in \frakM} L^p(\M;\R^k) \to \R\cup \{+\infty\}
\]
defined by
\[
E_{(\M,\g)}[u] = 
\begin{cases}
\int_\M W(du) \,\Volume & u\in W^{1,p}(\M;\R^k) \\
	+\infty & u\in L^p(\M;\R^k) \setminus W^{1,p}(\M;\R^k).
\end{cases}
\]
\end{enumerate}
\end{definition}

Energy densities are normally defined for a single Riemannian manifold. Here, we define an energy density for a class of Riemannian manifolds. The crux of the matter is that the energy density for a given manifold at a given point only ``sees" the metric at that point.  Conditions $2(a)$--$(c)$ are standard regularity conditions. Condition $2(d)$ is the one that involves dependence on the metric. In particular, when reduced to a single  manifold, it implies homogeneity and isotropy.  Indeed, let $(\M,\g)$ be a Riemannian manifold, $x,y\in\M$,  $A\in T^*_x\M\times\R^k$ and  $B\in T^*_y\M\times\R^k$. If $A$ and $B$ are related by an isometry, namely, $A = B\circ L$ for some $L\in\SO{\g_x,\g_y}$, then
\[
W_{(\M,\g)}(A) = W_{(\M,\g)}(B).
\]

The motivation for the Lipschitz continuity \eqref{eq:property_d_energy_density} is that it is a key property satisfied by the prototypical energy density $\DIS du$, as proved in the next proposition:

\begin{proposition}
Let $(\M,d,\tM,\g)$ be a $k$-dimensional body manifold.
For every $p\in(1,\infty)$, the energy density
\beq
\label{eq:W_dist_p}
W_{(\M,\g)}(du) =  (\DIS du)^p,
\eeq
is $p$-regular, where $u$ is considered as a map $(\M,d,\tM,\g)\to(\R^k,\euc)$.
\end{proposition}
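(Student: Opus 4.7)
The plan is to verify, one by one, the four sub-conditions of part~(2) of Definition~\ref{def:elastic_energy}. The starting point is that for any $R\in\SO{\g,\euc}$ one has $|R|_2=\sqrt{k}$ (and $|R|_\infty=1$), hence by the triangle inequality for $\dist_{\g,\euc}$
\[
\bigl||A|-\sqrt{k}\bigr|\le \DIS A \le |A|+\sqrt{k}.
\]
Raising this to the $p$-th power and using the elementary inequalities $(a+b)^p\le 2^{p-1}(a^p+b^p)$ and $a^p\le 2^{p-1}((a-b)^p + b^p)$ yields the boundedness~\eqref{eq:boundedness} and coercivity~\eqref{eq:coercivity} with constants depending only on $p$ and $k$, uniformly in $(\M,\g)$. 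Carath\'eodory regularity is automatic: $A\mapsto \DIS A$ is $1$-Lipschitz in $A$ because it is a distance function from a fixed set, while measurability in the base point follows from the smoothness of $\g$ on $\tM$.

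The main content is the metric-Lipschitz estimate~\eqref{eq:property_d_energy_density}, which I would prove pointwise. Fix $x\in\tM$ and let $y$ denote its image under the base map of the bundle isomorphism $L$, so that $L_x:(T_x\M,\g_x)\to(T_y\tilde{\N},\h_y)$. Since $\SO{\g_x,\h_y}$ is compact, there is some $R\in\SO{\g_x,\h_y}$ with $|L_x-R|_{\g,\h}=\DIS L_x$. Decomposing $A\circ L_x = A\circ R + A\circ(L_x-R)$ and using the $1$-Lipschitz property of $\dist_{\g,\euc}(\,\cdot\,,\SO{\g_x,\euc})$ together with sub-multiplicativity of the operator norm (absorbing the constant from~\eqref{eq:equiv_of_norms} if working with the Frobenius norm) gives
\[
|\DIS(A\circ L_x) - \DIS(A\circ R)|\le |A\circ(L_x-R)|_{\g,\euc}\le C\,|A|_{\h,\euc}\,\DIS L_x.
\]
The key algebraic point is the identity $\DIS(A\circ R)=\DIS A$: because $R$ is an isometry, post-composition $Q\mapsto Q\circ R$ is a bijection of $\SO{\h_y,\euc}$ onto $\SO{\g_x,\euc}$, and pre-composition by an isometry preserves both the operator and the Frobenius norm, so
\[
\DIS(A\circ R) = \inf_{Q\in\SO{\h_y,\euc}}|A\circ R - Q\circ R|_{\g,\euc} = \inf_{Q}|A-Q|_{\h,\euc} = \DIS A.
\]
Combining the two displays pointwise yields $|\DIS(A\circ L) - L^*\DIS A|\le C\,L^*|A|\,\DIS L\le C(1+L^*|A|)\DIS L$, which is exactly~\eqref{eq:property_d_energy_density}.

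The main obstacle is really the bookkeeping in the last step---keeping straight which metric is used to measure each norm and which $\SO{\cdot,\cdot}$ set each distortion is measuring distance to (the norms on $A\circ L$ and $A\circ R$ are $\g$-based, while the one on $A$ alone is $\h$-based). The pivotal trick is choosing $R$ to be the nearest point of $\SO{\g_x,\h_y}$ to $L_x$, because this single choice simultaneously (a) keeps $A\circ R$ within $C\,|A|\,\DIS L$ of $A\circ L$ in the $\g$-norm, and (b) preserves the distortion toward $\euc$ exactly, thereby tying the $\g$- and $\h$-computed distortions of $A$ together. Once this linchpin is in place the remaining estimates are elementary, and all constants depend only on $p$ and $k$, as required for uniformity across $\frakM$.
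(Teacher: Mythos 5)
Your proof is correct. All four sub-conditions are handled soundly: the two-sided bound $\bigl||A|-\sqrt{k}\bigr|\le\DIS A\le|A|+\sqrt{k}$ gives (b) and (c) after raising to the $p$-th power, (a) follows from $1$-Lipschitzness of a distance function and smoothness of $\g$ on the base, and your pointwise estimate for (d) is valid.

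For condition (d) you take a genuinely more streamlined route than the paper. The paper interpolates via the auxiliary quantity $\dist_{\g,\euc}(A\circ L,\SO{L^\star\h,\euc})$, which forces two separate lemmas: one identifying the Hausdorff distance $\dist^\infty_{\g,\euc}(\SO{\g,\euc},\SO{L^\star\h,\euc})$ with $\dist^\infty_{\g,\h}(L,\SO{\g,\h})$, and a second comparing $\dist_{\g,\euc}(A\circ L,\SO{L^\star\h,\euc})$ with $\dist_{\h,\euc}(A,\SO{\h,\euc})$. You instead pivot on the nearest isometry $R$ to $L$ and use the exact invariance $\DIS(A\circ R)=L^*\DIS A$ (pre-composition by a linear isometry is a norm-preserving bijection between the relevant copies of $\SO{\cdot,\euc}$), collapsing the whole argument to a single triangle inequality with one vanishing term. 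This buys a shorter proof and a marginally sharper constant, $|A|\,\DIS L$ rather than the paper's $(L^*|A|+2k)\,\DIS L$. What the paper's route buys in exchange is the standalone Lemma \ref{lm:SO_inner_product_spaces1}, an equality (not merely a bound) between a Hausdorff distance of two $\SO$-sets and a distortion, which is of potential independent interest but unnecessary for the proposition at hand.
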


\begin{proof}
The regularity property (a) holds since $W_{(\M,\g)}$ is continuous on every fiber and we have the smoothness of the manifold on its base.

The coercivity (b) and boundedness (c) are immediate, hence it remains to prove (d).
Let $(\M,d,\tM,\g)$ and $(\N,d',\tilde{\N},\h)$ be body manifolds. Let $L: T\tM \to T\tilde{\N}$ be a linear isomorphism, and let $A\in  T^*\tilde{\N}\otimes \R^k$. We need to prove \eqref{eq:property_d_energy_density}.

The energy density \eqref{eq:W_dist_p} depends on $\g$ in two ways: via the metric with respect to which the distortion is measured, and via the set of local isometries $\SO{\g,\euc}$ whose distance from is being measured. 
We treat each dependence separately:
\[
\begin{split}
|W_{(\M,\g)}(A\circ L)^{1/p} - L^* W_{(\N,\h)}(A)^{1/p} | &= 
|\DIS (A\circ L)  - L^* \DIS A| \\
&\hspace{-2cm}=
|\dist_{\g,\euc}(A\circ L , \SO{\g,\euc}) - L^* \dist_{\h,\euc}(A , \SO{\h,\euc})| \\
&\hspace{-2cm} \le |\dist_{\g,\euc}(A\circ L , \SO{\g,\euc}) - 
\dist_{\g,\euc}(A\circ L , \SO{L^\star\h,\euc})| \\
&\hspace{-2cm}\qquad + |\dist_{\g,\euc}(A\circ L , \SO{L^\star\h,\euc}) - 
L^* \dist_{\h,\euc}(A , \SO{\h,\euc})| \\
&\hspace{-2cm} \le \dist_{\g,\euc}(\SO{\g,\euc},\SO{L^\star\h,\euc}) \\
&\hspace{-2cm}\qquad+ |\dist_{\g,\euc}(A\circ L , \SO{L^\star\h,\euc}) - L^* \dist_{\h,\euc}(A , \SO{\h,\euc})|.
\end{split}
\]
In the passage to the last inequality we used the fact that in any metric space $(X,d)$, with $x\in X$ and $A,B\subset X$,
\[
|d(x,A) - d(x,B)| \le d(A,B),
\]
where the distance on the right-hand side is the hausdorff distance.

In \lemref{lm:SO_inner_product_spaces1} below we prove that
\[
\dist^\infty_{\g,\euc}(\SO{\g,\euc},\SO{L^\star\h,\euc}) = \dist^\infty_{\g,\h}(L,\SO{\g,\h}).
\]
Together with the norm inequality \eqref{eq:equiv_of_norms}, we get
\[
\dist_{\g,\euc}(\SO{\g,\euc},\SO{L^\star\h,\euc}) \le k\, \dist_{\g,\h}(L,\SO{\g,\h}) =k\, \DIS L.
\]
In  \lemref{lm:SO_inner_product_spaces2} below we prove that
\[
\left| \dist_{\g,\euc}(A\circ L, \SO{L^\star \h,\euc}) - L^* \dist_{\h,\euc}(A, \SO{ \h,\euc}) \right| 
\le  (L^*|A|+k) \,\DIS L.
\]
Putting everything together,
\[
|W_{(\M,\g)}(A\circ L)^{1/p} - L^* W_{(\N,\h)}(A)^{1/p} | \le(L^*|A|+2k) \, \DIS L,
\]
which conclude the proof.
\end{proof}

\begin{lemma}
\label{lm:SO_inner_product_spaces1}
Let $(V,\g)$ and $(W,\h)$ be two oriented $k$-dimensional inner-product spaces, and let $L:V\to W$ be an isomorphism. Then, for any metric $\mathfrak{r}$ on $V$, 
\[
\dist^\infty_{\mathfrak{r},\euc}( \SO{\g,\euc}, \SO{L^\star \h, \euc} ) = \dist^\infty_{\mathfrak{r},\h}(L, \SO{\g,\h}).
\]
\end{lemma}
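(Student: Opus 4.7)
The plan is to exploit a natural bijective parametrization of $\SO{L^\star\h,\euc}$ by $\SO{\h,\euc}$ (via pre-composition with $L$), and a bijective parametrization of $\SO{\g,\h}$ by $\SO{\g,\euc}$ (for any chosen base isometry into $\R^k$). Combining these, the Hausdorff distance on the left should reduce, term by term, to the distance $|Q-L|^\infty_{\mathfrak{r},\h}$ with $Q$ ranging over $\SO{\g,\h}$.

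Concretely, first I would observe that the map $T\mapsto T\circ L$ is a bijection from $\SO{\h,\euc}$ onto $\SO{L^\star\h,\euc}$, since $|T(L v)|_\euc=|L v|_\h=|v|_{L^\star\h}$ and orientations are preserved. Second, for any fixed $T\in\SO{\h,\euc}$ the map $R\mapsto T^{-1}R$ is a bijection from $\SO{\g,\euc}$ onto $\SO{\g,\h}$ (and inverse-compatibly, $Q\mapsto TQ$ goes the other way). The decisive algebraic identity is then
\[
R - T\circ L \;=\; T\bigl(T^{-1}R - L\bigr)\;=\;T(Q-L),
\]
where $Q:=T^{-1}R\in\SO{\g,\h}$. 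Since $T$ is an $(\h,\euc)$-isometry, composition with $T$ preserves the $(\mathfrak{r},\cdot)$ operator norm, i.e.\ $|T M|^\infty_{\mathfrak{r},\euc}=|M|^\infty_{\mathfrak{r},\h}$ for any linear $M:V\to W$; hence
\[
|R - T\circ L|^\infty_{\mathfrak{r},\euc} \;=\; |Q - L|^\infty_{\mathfrak{r},\h}.
\]

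With this identity in hand, I would compute the two one-sided Hausdorff distances separately. For fixed $S=T\circ L\in\SO{L^\star\h,\euc}$, as $R$ runs over $\SO{\g,\euc}$ the element $Q=T^{-1}R$ runs over all of $\SO{\g,\h}$, so
\[
\inf_{R\in\SO{\g,\euc}} |R-S|^\infty_{\mathfrak{r},\euc} \;=\; \inf_{Q\in\SO{\g,\h}}|Q-L|^\infty_{\mathfrak{r},\h} \;=\; \dist^\infty_{\mathfrak{r},\h}(L,\SO{\g,\h}),
\]
independently of $S$. Symmetrically, for fixed $R\in\SO{\g,\euc}$, writing $T=RQ^{-1}$ parametrizes $\SO{\h,\euc}$ as $Q$ ranges over $\SO{\g,\h}$, and the same identity $R-T L=T(Q-L)$ together with the norm-preservation under $T$ gives the same infimum. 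Taking sup over $S$ (resp.\ over $R$) yields the same value on both sides, so the Hausdorff distance equals $\dist^\infty_{\mathfrak{r},\h}(L,\SO{\g,\h})$.

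I expect no substantive obstacle: the only subtleties are bookkeeping (checking that the parametrizations are indeed bijections onto the right $\SO$-sets with correct orientation, and that composition with an $(\h,\euc)$-isometry from the left is an isometry for the operator norm from any third inner-product space $(V,\mathfrak{r})$). The latter is where the choice of the operator norm (rather than the Frobenius norm, which would involve $\g$ or $L^\star\h$ on $V$ in a less cooperative way) is essential, and it is presumably why the lemma is stated for $\dist^\infty$ and passed to $\dist$ via the equivalence~\eqref{eq:equiv_of_norms} in the application above.
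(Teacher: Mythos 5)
Your proof is correct and takes essentially the same approach as the paper's: both reduce the Hausdorff-distance claim to the pointwise identity $|R-S|^\infty_{\mathfrak{r},\euc}=|Q-L|^\infty_{\mathfrak{r},\h}$ for the correspondence $S=TL$, $Q=T^{-1}R$ (equivalently $Q=LS^{-1}R$ in the paper's notation), and then observe that each one-sided infimum is independent of the fixed element. You obtain the key identity via the factorization $R-TL=T(Q-L)$ and invariance of the operator norm under left-composition with $(\h,\euc)$-isometries, whereas the paper derives the same equality by expanding $d^2_\infty$ through inner products; the substance is identical.
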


\begin{proof}
Let $R\in \SO{\g,\euc}$ and $Q\in \SO{L^\star \h,\euc}$; both are isomorphisms $V\to \R^k$. The (operator norm) distance between $R$ and $Q$ is
\[
\begin{split}
d_\infty^2(R,Q) & = 
\sup_{\|v\|_\mathfrak{r}=1} \|(R-Q)v\|_\euc^2 =
\sup_{\|v\|_\mathfrak{r}=1} \brk{\il Rv,Rv\ir_\euc + \il Qv,Qv\ir_\euc -2\il Rv,Qv\ir_\euc} \\
& = \sup_{\|v\|_\mathfrak{r}=1} \brk{\il v,v\ir_\g + \il v,v\ir_{L^\star \h} -2\il Q Q^{-1} Rv,Qv\ir_\euc} \\
& = \sup_{\|v\|_\mathfrak{r}=1} \brk{\|v\|^2_\g + \|L v\|^2_{\h} -2\il L Q^{-1} Rv,L v\ir_{\h}},
\end{split}
\]
where in the last step we used the fact that for every $u,v\in V$, 
\[
\il Qu,Qv \ir_\euc = \il u,v \ir_{L^\star\h} = \il Lu,Lv \ir_\h.
\]
Denote $S = L Q^{-1} R : V \to W$, and observe that $S\in \SO{\g,\h}$ as
\[
\begin{split}
\il Sv, Su\ir_{\h} & = \il L Q^{-1} Rv, L Q^{-1} Ru \ir_{\h} = \il Q^{-1}Rv , Q^{-1}Ru\ir_{L^\star\h} \\
	& = \il Rv , Ru\ir_{\euc} = \il v , u\ir_{\g}.
\end{split}
\]
Also,
\[
\begin{split}
d_\infty^2(L,S) & = \sup_{\|v\|_\mathfrak{r}=1} \|(L-S)v\|_{\h}^2 = 
\sup_{\|v\|_\mathfrak{r}=1} \brk{\|L v\|^2_{\h} + \| v\|^2_{\g} -2\il L v,Sv\ir_{\h}} 
 = d^2_\infty(R,Q).
\end{split}
\]
It follows that for every $R\in \SO{\g,\euc}$ and $Q\in \SO{L^\star \h,\euc}$,
\[
\dist^\infty_{\mathfrak{r},\euc}(R, Q ) \ge \dist^\infty_{\mathfrak{r},\h}( L, \SO{\g,\h}),
\]
which implies that
\[
\dist^\infty_{\mathfrak{r},\euc}( \SO{\g,\euc}, \SO{L^\star \h,\euc} ) \ge \dist^\infty_{\mathfrak{r},\h}( L, \SO{\g,\h}).
\]

For the reverse inequality, the same arguments imply that for every  $S\in  \SO{\g,\h}$ and $Q \in \SO{L^\star \h,\euc}$, $R = Q \,L^{-1} \,S\in \SO{\g,\euc}$ satisfies 
\[
d_\infty(R,Q) = d_\infty(L,S).
\]
Taking $S$ to be a minimizer for the right-hand side, we obtain that for every $Q\in \SO{L^\star \h,\euc}$,
\[
\dist^\infty_{\mathfrak{r},\euc}( \SO{\g,\euc}, Q ) \le \dist^\infty_{\mathfrak{r},\h}( L, \SO{\g,\h}).
\]
Similarly, since for every  $S\in  \SO{\g,\h}$ and $R \in  \SO{\g,\euc}$, $Q = R S^{-1} L \in \SO{L^\star \h,\euc}$ satisfies $d(R,Q) = d(L,S)$ we obtain that for every $R\in  \SO{\g,\euc}$,
\[
\dist^\infty_{\mathfrak{r},\euc}(R, \SO{L^\star \h,\euc} ) \le \dist^\infty_{\mathfrak{r},\h}( L, \SO{\g,\h}).
\]
From the definition of Hausdorff distance, these two inequalities imply that
\[
\dist^\infty_{\mathfrak{r},\euc}( \SO{\g,\euc}, \SO{L^\star \h,\euc} ) \le \dist^\infty_{\mathfrak{r},\h}(L, \SO{\g,\h}).
\] 
\end{proof}

\begin{lemma}
\label{lm:SO_inner_product_spaces2}
Let $(V,\g)$ and $(W,\h)$ be two oriented $k$-dimensional inner-product spaces, and let $L:V\to W$ be an isomorphism and $A:W\to \R^k$. Then
\[
\left| \dist_{\g,\euc}(A\circ L, \SO{L^\star \h,\euc}) - \dist_{\h,\euc}(A, \SO{ \h,\euc}) \right| \le  
(|A|+k) \, \DIS L.
\]
\end{lemma}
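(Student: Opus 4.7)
The plan is to exploit the fact that postcomposition by $L$ gives a bijection $\SO{\h,\euc}\to \SO{L^\star\h,\euc}$, and then to reduce the inequality to a submultiplicative-type comparison between $|B\circ L|_{\g,\euc}$ and $|B|_{\h,\euc}$ for linear maps $B\colon W\to \R^k$. For the bijection, given $R\in \SO{\h,\euc}$, the calculation $\il RLv_1,RLv_2\ir_\euc=\il Lv_1,Lv_2\ir_\h=\il v_1,v_2\ir_{L^\star\h}$ shows that $RL\in \SO{L^\star\h,\euc}$, with inverse $Q\mapsto QL^{-1}$. This lets me parameterize both distances as infima over the same index set:
\[
\dist_{\g,\euc}(AL,\SO{L^\star\h,\euc}) = \inf_{R\in\SO{\h,\euc}}|(A-R)L|_{\g,\euc},\qquad \dist_{\h,\euc}(A,\SO{\h,\euc}) = \inf_{R\in\SO{\h,\euc}}|A-R|_{\h,\euc}.
\]

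The main step will be to establish, for every linear $B\colon W\to \R^k$, the estimate
\[
\bigl| |BL|_{\g,\euc} - |B|_{\h,\euc} \bigr| \le |B|\,\DIS L.
\]
To prove this I pick $S\in \SO{\g,\h}$ realizing $|L-S|_{\g,\h}=\DIS L$. Since $S$ is an isometry, evaluating $|BS|_{\g,\euc}^2$ on a $\g$-orthonormal frame of $V$ (whose image under $S$ is an $\h$-orthonormal frame of $W$) yields $|BS|_{\g,\euc}=|B|_{\h,\euc}$. The triangle inequality combined with a standard submultiplicative bound then gives
\[
\bigl| |BL|_{\g,\euc} - |B|_{\h,\euc} \bigr| = \bigl| |BL|_{\g,\euc} - |BS|_{\g,\euc} \bigr| \le |B(L-S)|_{\g,\euc} \le |B|\,|L-S|_{\g,\h} = |B|\,\DIS L,
\]
where the submultiplicative step follows by bounding each term of $|B(L-S)|_{\g,\euc}^2=\sum_i |B(L-S)e_i|_\euc^2$ by $|B|_\infty^2\,|(L-S)e_i|_\h^2\le |B|^2\,|(L-S)e_i|_\h^2$.

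Finally, I apply the estimate with $B=A-R$ for each $R\in\SO{\h,\euc}$. Since $|R|=\sqrt{k}$ for any $\h$-to-$\euc$ isometry, the triangle inequality gives $|A-R|\le |A|+\sqrt{k}\le |A|+k$, whence
\[
\bigl| |(A-R)L|_{\g,\euc}-|A-R|_{\h,\euc} \bigr| \le (|A|+k)\,\DIS L
\]
uniformly in $R$. Taking the infimum over $R$ in each direction separately (via the parameterization from the first step) yields the two-sided bound of the lemma. The only real obstacle is the submultiplicative estimate, which mixes three different inner-product structures; the key observation that makes it clean is that the operator norm is controlled by the Frobenius norm, so one can absorb $|B|_\infty$ into $|B|$ with no loss.
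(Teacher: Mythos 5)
Your argument is correct and follows essentially the same route as the paper: both exploit the bijection $R \mapsto R\circ L$ from $\SO{\h,\euc}$ to $\SO{L^\star\h,\euc}$, establish the pointwise estimate $\bigl| |B\circ L|_{\g,\euc} - |B|_{\h,\euc} \bigr| \le |B|\,\DIS L$ via a submultiplicative bound against a nearest isometry $S\in\SO{\g,\h}$, and then apply it with $B=A-R$ before infimizing over $R$. (Minor terminology slip: $R\mapsto R\circ L$ is \emph{pre}composition by $L$, not postcomposition; the formulas are right.)
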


\begin{proof}
Let $B:W\to \R^k$. Then, for every $Q\in \SO{\g,\h}$,
\[
| \,|B|_{\h,\euc} - |B\circ L|_{\g,\euc} | = | \,|B\circ Q |_{\g,\euc} - |B\circ L|_{\g,\euc} | \le |B\circ (Q - L)|_{\g,\euc} \le |B|_{\h,\euc} \, |Q-L|_{\g,\h},
\]
where we used the sub-multiplicativity of the Frobenius norm. Hence,
\[
| \,|B|_{\h,\euc} - |B\circ L|_{\g,\euc} | \le |B|_{\h,\euc} \, \dist_{\g,\h}(L, \SO{\g,\h}).
\]
Take $B = A - R$ with $R\in \SO{ \h,\euc}$.
Then, $|B|_{\h,\euc} \le |A|_{\h,\euc} + k$, and
\[
\begin{split}
(|A|_{\h,\euc}+k) \, \dist_{\g,\h}(L, \SO{\g,\h}) 
&\ge |B|_{\h,\euc} \, \dist_{\g,\h}(L, \SO{\g,\h}) \\
&\ge |B|_{\h,\euc} - |B\circ L|_{\g,\euc} \\
&\ge \dist_{\h,\euc}(A, \SO{ \h,\euc}) - |B\circ L|_{\g,\euc} \\
&= \dist_{\h,\euc}(A, \SO{ \h,\euc}) - |A\circ L - R\circ L|_{\g,\euc}.
\end{split}
\] 
Since $R\circ L\in\SO{L^\star\h,\euc}$ and this holds for all $R\in \SO{ \h,\euc}$ we obtain
\[
\begin{split}
& (|A|_{\h,\euc}+k) \, \dist_{\g,\h}(L, \SO{\g,\h}) \\
	&\qquad \ge \dist_{\h,\euc}(A, \SO{ \h,\euc}) - \dist_{\g,\euc}(A\circ L, \SO{L^\star \h,\euc}).
\end{split}
\]
Repeating the same argument the other way around we obtain an absolute value in the second line.
\end{proof}

\section{$\Gamma$-convergence of elastic energies of converging manifolds}
\label{sec:Gamma_conv}

Let $\frakM$ be a class of $k$-dimensional body manifolds. Fix $p\in(1,\infty)$, and let $W$ be a $p$-regular energy density on $\frakM$, with $E$ the associated energy functional. For $(\M,d,\tM,\g)\in \frakM$, denote
\[
\Gamma E_{\M} = 
\begin{cases}
	\int_\M QW_{(\M,\g)}(du) \,\Volume & u\in W^{1,p}(\M;\R^k), \\
	+\infty & u\in L^p(\M;\R^k) \setminus W^{1,p}(\M;\R^k),
\end{cases}
\]
where $QW_{(\M,\g)}$ is the quasi-convex envelope of $W_{(\M,\g)}$ (see Section 3.4 in \cite{KM14} for a discussion on quasi-convexity in Riemannian settings).

In this section we prove $\Gamma$-convergent results for a sequence $E_{\M_n}$, where $\M_n\in \frakM$ is a convergent sequence of body manifolds. 
In \secref{sec:Gamma_conv2} we prove $\Gamma$-convergence, or establish an upper bound to $\Gamma$-convergent subsequences, depending on whether $\M_n\to\M$ uniformly or in the mean.
In \secref{sec:conv_of_minimizers} we adapt to our setting the standard convergence of minimizers for $\Gamma$-convergent (sub)sequences.

\subsection{$\Gamma$-convergence}
\label{sec:Gamma_conv2}

\begin{theorem}
\label{thm:Gamma_convergence_general}
Let $\M_n,\M \in \frakM$, then the following holds:
\begin{enumerate}
\item If $\M_n\to\M$ uniformly, then $E_{\M_n}$ $\Gamma$-converges to $\Gamma E_{\M}$.
\item If $\M_n\to\M$ in the mean, then the $\Gamma$-limit of every $\Gamma$-convergent subsequence of $E_{\M_n}$ is bounded from above by $\Gamma E_{\M}$.
\end{enumerate}
The $\Gamma$-convergence is with respect to the $L^p$-topology induced by some realization $F_n:\M\to\M_n$ of the convergence.
\end{theorem}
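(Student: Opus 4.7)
The theorem splits into two halves: the upper bound (recovery sequence), which should work under both convergence modes, and the matching lower bound (liminf inequality), which requires the stronger uniform convergence. The unifying device in both halves is condition 2(d) of \defref{def:elastic_energy}: applied with $L=dF_n$, it converts the pointwise energy density on $\M_n$ into the one on $\M$ (or vice versa) up to a pointwise error of order $(1+|A|)\DIS dF_n$. The plan is therefore to translate everything to the fixed manifold $\M$ via the $F_n$, apply condition 2(d) fiberwise, and then invoke standard weak-$W^{1,p}$ relaxation theory for quasi-convex integrands with $p$-growth.

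\textbf{Upper bound (both cases).} Given $u\in W^{1,p}(\M;\R^k)$, first invoke Dacorogna's classical relaxation theorem for quasi-convex envelopes with $p$-growth on the fixed manifold $\M$, producing a \emph{Lipschitz} sequence $\tilde u_k\in W^{1,\infty}(\M;\R^k)$ with $\tilde u_k\to u$ in $L^p$ and $E_{\M}[\tilde u_k]\to \Gamma E_{\M}[u]$. For each $k$, set $u_n^{(k)}:=\tilde u_k\circ F_n^{-1}$; the identity $u_n^{(k)}\circ F_n=\tilde u_k$ makes the $L^p$-convergence trivial. Condition 2(d) with $A=du_n^{(k)}$ and $L=dF_n$ (so that $A\circ L=d\tilde u_k$), combined with the uniform bi-Lipschitz bound giving $F_n^*|du_n^{(k)}|\le C|d\tilde u_k|$, yields the pointwise bound
\[
F_n^*W_{(\M_n,\g_n)}(du_n^{(k)}) \le (1+\eta)\,W_{(\M,\g)}(d\tilde u_k) + C_\eta(1+\|d\tilde u_k\|_\infty)^p (\DIS dF_n)^p.
\]
Integrating against the volume form (with the density $\VolumeFn/\Volume\to 1$ uniformly by \eqref{eq:volume_conv}) and using \lemref{lm:every_p_rigidity}, which gives $\|(\DIS dF_n)^p\|_{L^1(\M)}\to 0$ in both convergence modes, one obtains $\limsup_n E_{\M_n}[u_n^{(k)}]\le(1+\eta)\,E_{\M}[\tilde u_k]$. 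Sending $\eta\to 0$, then $k\to\infty$, and performing an Attouch-type diagonal extraction produces the desired recovery sequence bounding $\Gamma E_{\M}[u]$ from above.

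\textbf{Lower bound (uniform case).} Let $u_n$ satisfy $u_n\circ F_n\to u$ in $L^p$ with $\liminf_n E_{\M_n}[u_n]<\infty$. Coercivity \eqref{eq:coercivity} bounds $du_n$ in $L^p(\M_n;\R^k)$, and \lemref{lm:L_p_convergence} then bounds the pullbacks $v_n:=u_n\circ F_n$ in $W^{1,p}(\M;\R^k)$; after extracting a subsequence, $v_n\rightharpoonup u$ weakly. The reverse side of condition 2(d) gives
\[
W_{(\M,\g)}(dv_n) \le (1+\eta)\,F_n^*W_{(\M_n,\g_n)}(du_n) + C_\eta(1+F_n^*|du_n|)^p(\DIS dF_n)^p.
\]
Integrating against $\Volume$, the first term is bounded by $(1+\eta)\|(\VolumeFn/\Volume)^{-1}\|_\infty E_{\M_n}[u_n]$, with the prefactor tending to $1$; the error term is bounded by $\|(\DIS dF_n)^p\|_\infty$ times the uniformly $L^1$-bounded quantity $(1+F_n^*|du_n|)^p$. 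Here the essential input from \emph{uniform} convergence enters: only \eqref{eq:uniform_asymptotic_rigidity} forces the $L^\infty$-vanishing of $\DIS dF_n$ required to kill this error. Hence $\liminf_n\int_\M W(dv_n)\Volume \le (1+\eta)\liminf_n E_{\M_n}[u_n]$. Combined with the standard weak-$W^{1,p}$ lower semicontinuity for quasi-convex integrands with $p$-growth, $\int_\M QW_{(\M,\g)}(du)\Volume\le\liminf_n\int_\M W(dv_n)\Volume$, and letting $\eta\to 0$ yields $\Gamma E_\M[u]\le\liminf_n E_{\M_n}[u_n]$, completing the identification of the $\Gamma$-limit.

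\textbf{The main obstacle.} The asymmetry between the two convergence modes is concentrated in the error term $(1+F_n^*|du_n|)^p(\DIS dF_n)^p$ of the lower bound: for a generic bounded-energy sequence $u_n$, the first factor lies only in $L^1(\M)$, so absorbing it requires $\DIS dF_n\to 0$ in $L^\infty$. In the recovery sequence one has the freedom to choose Lipschitz $\tilde u_k$, placing the analogous first factor in $L^\infty$ and making $L^1$-vanishing of $(\DIS dF_n)^p$ sufficient. This asymmetry is precisely why the mean-convergence statement only furnishes an upper bound on every $\Gamma$-convergent subsequence. A subsidiary technical point is securing the Lipschitz approximants $\tilde u_k$ with energies converging to the relaxation; this is classical but nontrivial, and should be handled by Lusin-type truncation combined with the standard $W^{1,\infty}$ recovery-sequence construction for quasi-convex envelopes.
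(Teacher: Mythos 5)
Your proposal is correct in substance and uses the same core ingredients as the paper (the Lipschitz estimate $2(d)$ applied with $L=dF_n$ or $dF_n^{-1}$, \lemref{lm:every_p_rigidity} for $L^p$-vanishing of the distortion, weak $W^{1,p}$ lower semicontinuity of quasi-convex integrands, and coercivity for compactness of bounded-energy sequences), but it organizes the upper bound quite differently. The paper takes the \emph{unrelaxed} energy $E_\M$ as the pivot: it shows directly that $u_n=u\circ F_n^{-1}$ satisfies $\lim_n E_n[u_n]=E_\M[u]$ for any $u\in W^{1,p}(\M;\R^k)$, using an Egorov argument to split $\M$ into a set where $\DIS dF_n^{-1}\to 0$ uniformly and a small remainder (this is needed because $|du|$ is merely in $L^p$); it then deduces $E_\infty\le\tilde E_\M$ from lower semicontinuity of $\Gamma$-limits, and finishes by identifying $\tilde E_\M=\Gamma E_\M$ via the relaxation theory of Acerbi--Fusco transplanted to manifolds. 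You instead relax \emph{first} on the fixed manifold $\M$, producing Lipschitz approximants $\tilde u_k$ with $E_\M[\tilde u_k]\to\Gamma E_\M[u]$, transfer each $\tilde u_k$ to $\M_n$ (where the $L^\infty$-bound on $d\tilde u_k$ makes the error term trivial to absorb, avoiding Egorov), and then diagonal-extract. Both routes work; yours trades the Egorov argument for a diagonal extraction and for the additional (classical but nontrivial) step of constructing Lipschitz recovery sequences for the quasi-convex relaxation, which the paper sidesteps by invoking lower semicontinuity of $\Gamma$-limits. Your lower-bound argument is essentially the paper's, differing only in that you carry the error via a Young-type $(1+\eta)$ inequality on the densities, whereas the paper keeps the $1/p$-power form and estimates in $L^p$-norm; and you implicitly handle the $u\notin W^{1,p}$ case through coercivity in the liminf inequality rather than isolating it as a separate proposition. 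You also dispense with the compactness--Urysohn bookkeeping the paper uses, since you directly produce recovery sequences and the liminf inequality.
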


Note that although the topology depends on the choice of realizations $F_n$, neither the $\Gamma$-limit (in the first case) or the bound on the $\Gamma$-limit (in the second case) depends on this choice.

\begin{proof}
For succinctness, we will write $E_n = E_{\M_n}$, $E = E_{\M}$ and $\Gamma E = \Gamma E_{\M}$. Similarly we will write $W_n = W_{(\M_n,\g_n)}$ and $W_\infty = W_{(\M,\g)}$.

Let $E_\infty$ be the $\Gamma$-limit of a (not-relabeled) subsequence of $E_n$. Such a subsequence always exists by the general compactness theorem of $\Gamma$-convergence (see Theorem~8.5 in \cite{Dal93} for the classical result, or Theorem 4.7 in \cite{KS08} for the case where each functional is defined on a different space).

Part 2, which only assumes convergence in the mean, states that $E_\infty \le \Gamma E$.
This upper bound follows from Propositions \ref{pn:G_conv_L_p_infty_general} and \ref{pn:G_conv_L_p_upper_bound}.

To prove Part 1, which assumes uniform convergence, it is enough to prove that $E_\infty = \Gamma E$.
Indeed, since by the compactness theorem, every sequence has a $\Gamma$-converging subsequence, the Urysohn property of $\Gamma$-convergence (see Proposition 8.3 in \cite{Dal93}) implies that if all converging subsequences converge to the same limit, then the entire sequence converges to that limit.
\propref{pn:G_conv_L_p_lower_bound} establishes the lower bound $E_\infty \ge \Gamma E$, which together with the upper bound concludes the proof.
\end{proof}

\begin{proposition}[Infinity case]
\label{pn:G_conv_L_p_infty_general}
Assume $\M_n\to \M$ in the mean, and let $u\in L^p(\M;\R^k) \setminus W^{1,p}(\M;\R^k)$. Then $E_\infty[u] = \infty = \Gamma E[u]$.
\end{proposition}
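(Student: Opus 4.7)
The statement $\Gamma E[u] = \infty$ is immediate: by definition of $\Gamma E_{\M}$, whenever $u \in L^p(\M;\R^k) \setminus W^{1,p}(\M;\R^k)$, we set $\Gamma E[u] = +\infty$. So all the work is in showing $E_\infty[u] = \infty$, that is, that the $\Gamma$-liminf along any recovering sequence blows up.

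The plan is to proceed by contradiction. Suppose there exists a sequence $u_n \in L^p(\M_n;\R^k)$ with $u_n \to u$ in $L^p$ (relative to $F_n$, as in \defref{def:emb_conv}) and $\liminf_n E_n[u_n] < \infty$. Pass to a (not relabeled) subsequence along which $E_n[u_n] \le C$ for some constant $C$. Applying the uniform coercivity \eqref{eq:coercivity}, we obtain
\[
\alpha \int_{\M_n} |du_n|^p \,\Volumen \le E_n[u_n] + \beta\,\textVol(\M_n) \le C',
\]
where the uniform bound on $\textVol(\M_n)$ follows from the volume convergence \eqref{eq:volume_conv} combined with the uniform bi-Lipschitz condition \eqref{eq:uni_bi_Lipschitz}. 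Hence $\|du_n\|_{L^p(\M_n;\R^k)}$ is bounded uniformly in $n$.

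Next, since $u_n \to u$ in $L^p$ (relative to $F_n$), the first part of \lemref{lm:L_p_convergence} gives that $\|u_n\|_{L^p(\M_n;\R^k)}$ is bounded. Together with the gradient bound, this shows that $u_n$ is bounded in $W^{1,p}(\M_n;\R^k)$. The second part of \lemref{lm:L_p_convergence} then yields that $u_n \circ F_n$ is bounded in $W^{1,p}(\M;\R^k)$ and, after extraction of a further subsequence, converges weakly in $W^{1,p}(\M;\R^k)$ to some limit $v \in W^{1,p}(\M;\R^k)$.

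It remains to identify $v$ with $u$. Weak convergence in $W^{1,p}(\M;\R^k)$ implies $L^p$-convergence (up to a subsequence, by Rellich--Kondrachov, or simply by the continuous inclusion $W^{1,p} \hookrightarrow L^p$ combined with weak convergence), so $u_n \circ F_n \to v$ in $L^p(\M;\R^k)$. Since we also assumed $u_n \circ F_n \to u$ in $L^p(\M;\R^k)$, uniqueness of $L^p$-limits forces $v = u$. Therefore $u \in W^{1,p}(\M;\R^k)$, contradicting the hypothesis. This yields $\liminf_n E_n[u_n] = \infty$ for every sequence $u_n \to u$, hence $E_\infty[u] = \infty$.

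The proof is essentially a routine combination of coercivity with the weak compactness statement of \lemref{lm:L_p_convergence}; the one point worth checking carefully is that the two notions of ``convergence of $u_n$ on $\M_n$ to $u$ on $\M$"---weak $W^{1,p}$-convergence of $u_n \circ F_n$ and the prescribed $L^p$-convergence $u_n \to u$ relative to $F_n$---really refer to the same candidate limit $u$, which is precisely the uniqueness-of-$L^p$-limits step above.
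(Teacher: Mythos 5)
Your argument is correct and follows the paper's proof essentially step for step: pass to a subsequence with bounded energy, invoke coercivity to bound $\|du_n\|_{L^p}$, combine with Lemma~\ref{lm:L_p_convergence} to extract a weakly $W^{1,p}$-convergent subsequence of $u_n\circ F_n$, and identify the weak limit with $u$ to contradict $u\notin W^{1,p}(\M;\R^k)$. The only small inaccuracy is in the aside that the continuous inclusion $W^{1,p}\hookrightarrow L^p$ plus weak $W^{1,p}$-convergence gives strong $L^p$-convergence --- it only gives weak $L^p$-convergence, but that already suffices to conclude $v=u$ by uniqueness of weak limits, so the argument is unaffected.
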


\begin{proof}
Suppose, for contradiction, that $E_\infty[u]<\infty$. Let $u_n\to u$ be a recovery sequence, namely,
\[
\lim_{n\to\infty} E_n[u_n] =  E_\infty[u] < \infty.
\] 
W.l.o.g. we may assume that $E_n[u_n]<\infty$ for all $n$, and in particular,
$u_n\in W^{1,p}(\M_n,\R^k)$. 
The coercivity of $W_n$ implies that
\[
\sup_n \int_{\M_n}  |du_n|_{\g_n,\euc}^p \,\Volumen < \infty.
\]
Thus, $u_n$ is uniformly bounded in $W^{1,p}$, and by \lemref{lm:L_p_convergence},
$u_n\circ F_n$ weakly converges (modulo a subsequence) in $W^{1,p}(\M;\R^k)$. 
By the uniqueness of the limit, this limit is $u$, hence $u\in W^{1,p}(\M;\R^k)$, which is a contradiction. \end{proof}

\begin{proposition}[Upper bound]
\label{pn:G_conv_L_p_upper_bound}
Assume $\M_n\to\M$ in the mean.
Then, for every $u\in W^{1,p}(\M;\R^k)$,
\[
E_\infty[u] \le \Gamma E[u].
\]
\end{proposition}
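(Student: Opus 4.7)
The plan is to construct the recovery sequence in two stages: first pull a standard relaxation sequence for $u$ on the fixed manifold $\M$, then transfer each term to $\M_n$ by composition with $F_n^{-1}$, and finally glue through Attouch's diagonal trick. Since $QW_{(\M,\g)}$ is the quasi-convex envelope of $W_{(\M,\g)}$ and both satisfy $p$-growth, $\Gamma E$ coincides with the $L^p$-relaxation of $E$ on $W^{1,p}(\M;\R^k)$ (cf.~Section~3.4 of \cite{KM14}). Hence there exists $v_j\in W^{1,p}(\M;\R^k)$ with $v_j\to u$ strongly in $L^p(\M;\R^k)$ and $E[v_j]\to \Gamma E[u]$.

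The heart of the argument is a single-element transfer estimate: for any fixed $v\in W^{1,p}(\M;\R^k)$, the sequence $w_n := v\circ F_n^{-1}$ belongs to $W^{1,p}(\M_n;\R^k)$ by uniform bi-Lipschitz regularity \eqref{eq:uni_bi_Lipschitz}, satisfies $w_n\circ F_n = v$ (hence $w_n\to v$ in the sense of Definition~\ref{def:emb_conv}), and obeys $\limsup_n E_n[w_n]\le E[v]$. To prove the latter, I would apply the Lipschitz-in-metric property \eqref{eq:property_d_energy_density} pointwise with $L=(dF_n)_p$ and $A=(dw_n)_{F_n(p)}$; since $A\circ L = dv_p$ and $F_n^*|dw_n| = |dv\circ (dF_n)^{-1}|\le C|dv|$ by \eqref{eq:uni_bi_Lipschitz}, this yields the pointwise bound
\[
F_n^* W_n(dw_n)^{1/p}\le W_{(\M,\g)}(dv)^{1/p} + C'(1+|dv|)\,\DIS(dF_n).
\]
A change of variables with $\rho_n := \VolumeFn/\Volume$, followed by Minkowski's inequality in $L^p(\M,\Volume)$, then produces
\[
E_n[w_n]^{1/p} \le \Norm{\rho_n}_\infty^{1/p} \Brk{E[v]^{1/p} + C''\Norm{(1+|dv|)\DIS(dF_n)}_{L^p(\M)}}.
\]

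The residual term vanishes because $\DIS(dF_n)$ is uniformly bounded in $L^\infty(\M)$ (from $\DIS(dF_n)\le |dF_n|_2 + k$ together with \eqref{eq:uni_bi_Lipschitz}) while simultaneously converging to $0$ in $L^p(\M)$ by Lemma~\ref{lm:every_p_rigidity}; dominated convergence against the $L^p$-majorant $(1+|dv|)\sup_n\Norm{\DIS dF_n}_\infty$ then forces $\Norm{(1+|dv|)\DIS(dF_n)}_{L^p(\M)}\to 0$. Combined with $\Norm{\rho_n}_\infty\to 1$ from \eqref{eq:volume_conv}, this delivers $\limsup_n E_n[w_n]\le E[v]$.

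Finally, Attouch's diagonal argument combines the two stages: choose a strictly increasing sequence $n_j$ with $E_{n_j}[v_j\circ F_{n_j}^{-1}]\le E[v_j]+1/j$, set $j(n):=\max\{j:n_j\le n\}$, and define $u_n := v_{j(n)}\circ F_n^{-1}$. Since $j(n)\to\infty$, one has $u_n\circ F_n = v_{j(n)}\to u$ in $L^p(\M)$ and $\limsup_n E_n[u_n]\le \limsup_j(E[v_j]+1/j) = \Gamma E[u]$, which is the desired bound. The main obstacle is the pointwise estimate in the second paragraph; its availability is precisely what condition~(d) of $p$-regularity was engineered to guarantee, while the other three ingredients of mean convergence enter in the natural places (uniform bi-Lipschitz to define $w_n$ and dominate $F_n^*|dw_n|$, $L^p$-rigidity via Lemma~\ref{lm:every_p_rigidity} to kill the residual term, and uniform volume convergence to control the change of variables).
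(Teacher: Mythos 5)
Your proposal is correct in substance and arrives at the same key estimate as the paper, but takes a genuinely different route in assembling the pieces. The paper first proves $E_\infty[u]\le E[u]$ for every $u\in W^{1,p}(\M;\R^k)$ using the single recovery sequence $u_n=u\circ F_n^{-1}$, and then invokes the abstract lower semicontinuity of $\Gamma$-limits to upgrade $E_\infty\le E$ to $E_\infty\le\tilde E$, identifying $\tilde E$ with $\Gamma E$ via the relaxation result of \cite{AF84,LR95,KM14}. You instead build an explicit recovery sequence by first taking a relaxation sequence $v_j\to u$ with $E[v_j]\to\Gamma E[u]$ on the fixed manifold, pushing each $v_j$ forward through $F_n^{-1}$, and gluing through a diagonal argument. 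Both are valid; the paper's route is slightly slicker because it sidesteps the diagonal, while your route is more constructive. A second mirror-image difference: you apply the Lipschitz property \eqref{eq:property_d_energy_density} with $L=dF_n$ and $A=dw_n$, which produces $\DIS dF_n$ and then uses \eqref{eq:asymptotic_rigdity_p}; the paper takes $L=dF_n^{-1}$ and $A=du$, producing $\DIS dF_n^{-1}$ and using \eqref{eq:reverse_asymptotic_rigdity}. Both halves of Lemma~\ref{lm:every_p_rigidity} were proved precisely so that either orientation works.

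Two small imprecisions in your write-up should be cleaned up, though neither affects the soundness of the argument. First, in the diagonal step you should choose $n_j$ so that $E_n[v_j\circ F_n^{-1}]\le E[v_j]+1/j$ for \emph{all} $n\ge n_j$, not merely at $n=n_j$; this is possible because you have already established $\limsup_n E_n[v_j\circ F_n^{-1}]\le E[v_j]$ for each fixed $j$, and it is needed because $u_n=v_{j(n)}\circ F_n^{-1}$ uses all $n$ in the range $[n_j,n_{j+1})$. Second, passing from $\DIS dF_n\to 0$ in $L^p$ to $\Norm{(1+|dv|)\DIS dF_n}_{L^p}\to 0$ by ``dominated convergence'' requires an a.e.\ convergence argument; since $L^p$ convergence only gives a.e.\ convergence along subsequences, you should either invoke the sub-subsequence trick, or argue directly by splitting $\M$ into $\{|dv|\le R\}$ and $\{|dv|>R\}$ and estimating each piece (the first using $\|\DIS dF_n\|_{L^p}\to 0$, the second using smallness of the tail of $|dv|$ in $L^p$), which is cleaner and avoids Egorov entirely.
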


\begin{proof}
Let $u\in W^{1,p}(\M;\R^k)$. 
Define $u_n = u\circ F_n^{-1} \in L^p(\M_n;\R^k)$. Trivially, $u_n \to u$ in $L^p$,  
and 
by the definition of $\Gamma$-limit,
\beq
\label{eq:upper_bound_0}
E_\infty[u] \le \liminf_n E_n[u_n]. \nonumber
\eeq
We now show that
\beq
\label{eq:upper_bound_1}
\lim_n E_n[u_n] = E[u].
\eeq
Since $|dF_n^{-1}|$ is uniformly bounded, $u_n \in W^{1,p}(\M_n;\R^k)$. Therefore, \eqref{eq:upper_bound_1} reads
\[
\lim_n \int_{\M_n} W_n(d(u\circ F_n^{-1})) \,\Volumen = \int_{\M} W_\infty(du) \,\Volume.
\]

First,
\beq
\label{eq:upper_bound_2}
\begin{split}
&\lim_n \int_{\M_n} W_n(d(u\circ F_n^{-1})) \,\Volumen = \lim_n \int_{\M} F_n^*W_n(d(u\circ F_n^{-1})) \,\VolumeFn \\
				&\qquad = \lim_n \int_{\M} F_n^*W_n(d(u\circ F_n^{-1})) \,\Volume +  \int_{\M} F_n^*W_n(d(u\circ F_n^{-1})) \brk{1-\frac{\Volume}{\VolumeFn}}\,\VolumeFn \\
				&\qquad = \lim_n \int_{\M} F_n^*W_n(d(u\circ F_n^{-1})) \,\Volume.
\end{split} 
\eeq
In the passage from the second to the third line we used the boundedness \eqref{eq:boundedness} of $W$  and the uniform convergence  \eqref{eq:volume_conv} of the volume forms.

Second, 
\beq
\label{eq:upper_bound_3}
\begin{split}
&\left| \brk{\int_{\M} F_n^*W_n(d(u\circ F_n^{-1})) \,\Volume}^{1/p}  - \brk{\int_{\M} W_\infty(du) \,\Volume }^{1/p} \right|\\
	&\qquad\le \brk{\int_{\M} \left| F_n^*W_n(d(u\circ F_n^{-1}))^{1/p} - W_\infty(du)^{1/p}  \right|^p \,\Volume }^{1/p}\\
	&\qquad \le C \brk{\int_\M (1 + |du|)^p \, \dist^p_{F_n^*\g_n,\g} (dF_n^{-1}, \SO{F_n^*\g_n,\g}) \, \Volume}^{1/p}.
\end{split}
\eeq
In the passage from the first to the second line we used the reverse triangle inequality, and in the passage to the third line we used 
the Lipschitz continuity \eqref{eq:property_d_energy_density} of $W$, with $L = d(F_n^{-1})$ and $A=du$.

Since by \eqref{eq:reverse_asymptotic_rigdity}, $\dist_{F_n^*\g_n,\g} (dF_n^{-1}, \SO{F_n^*\g_n,\g}) \to 0$ in $L^p$, we can assume by moving to a subsequence that this sequence converges almost everywhere.
Let $\e>0$. By Egorov's  theorem, there exists an $A\subset \M$ such that $\text{Vol}_\g(\M\setminus A) < \e$ and $\dist_{F_n^*\g_n,\g} (dF_n^{-1}, \SO{F_n^*\g_n,\g}) \to 0$ uniformly on $A$. 
Since $|du|$ is in $L^p$ and $\dist_{F_n^*\g_n,\g} (dF_n^{-1}, \SO{F_n^*\g_n,\g})$ is bounded uniformly by some constant $C'$, we obtain that
\beq
\label{eq:upper_bound_4}
\begin{split}
&\limsup_n \int_{\M} (1+|du|)^p \, \dist^p_{F_n^*\g_n,\g} (dF_n^{-1}, \SO{F_n^*\g_n,\g}) \,\Volume \\
	&\qquad \le \limsup_n \int_{\M\setminus A} (1+ |du|)^p \, \dist^p_{F_n^*\g_n,\g} (dF_n^{-1}, \SO{F_n^*\g_n,\g}) \,\Volume\\
	&\qquad \le \limsup_n C'\int_{\M\setminus A} (1+ |du|)^p \,\Volume.
\end{split}
\eeq
Since $\M\setminus A$ is arbitrary small and $|du|\in L^p$, the righthand side is arbitrary small, hence
\eqref{eq:upper_bound_3} and \eqref{eq:upper_bound_4} imply that
\beq
\label{eq:upper_bound_5}
\lim_n \int_{\M} F_n^*W_n(d(u\circ F_n^{-1})) \,\Volume  = \int_{\M} W_\infty(du) \,\Volume.
\eeq
Together with \eqref{eq:upper_bound_2}, \eqref{eq:upper_bound_1} follows.

We therefore obtain that for every $u\in W^{1,p}(\M;\R^k)$
\beq
\label{eq:upper_bound_200}
E_\infty[u] \le E[u].
\eeq
Together with \propref{pn:G_conv_L_p_infty_general}, we obtain that \eqref{eq:upper_bound_200} holds for every $u\in L^p(\M;\R^k)$. 
Since $E_\infty$ is a $\Gamma$-limit with respect to the $L^p$ topology, it is lower-semicontinuous (see Proposition 6.8 in \cite{Dal93} or Lemma 4.6 in \cite{KS08}), and
\beq
\label{eq:upper_bound_201}
E_\infty \le \tilde{E},
\eeq
where $\tilde{E}$ is the lower semicontinuous envelope of $E$ with respect to the strong $L^p$ topology. 
We complete the proof by showing that $\tilde{E} = \Gamma E$.
The argument is essentially the same as in the proof of Proposition 4.3 in \cite{KM14}, using Lemma 5 in \cite{LR95} and the results of \cite{AF84} (see Appendix B in \cite{KM14} for the relevant generalization of \cite{AF84} to manifolds).
\end{proof}

\begin{proposition}[Lower bound]
\label{pn:G_conv_L_p_lower_bound}
Assume $\M_n\to\M$ uniformly. Then, for every $u\in W^{1,p}(\M;\R^k)$,
\[
E_\infty[u] \ge \Gamma E[u] .
\]
\end{proposition}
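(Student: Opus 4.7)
The plan is to establish the liminf inequality directly: for any sequence $u_n\in L^p(\M_n;\R^k)$ with $u_n\to u$ (relative to $F_n$) in the sense of \defref{def:emb_conv}, I will show $\liminf_n E_n[u_n] \ge \Gamma E[u]$. One may assume the liminf is finite and, after passing to a subsequence, that $E_n[u_n]$ is bounded and convergent. The coercivity \eqref{eq:coercivity} then forces $u_n$ to be uniformly bounded in $W^{1,p}(\M_n;\R^k)$, so \lemref{lm:L_p_convergence}(2) yields that $v_n := u_n\circ F_n$ is bounded in $W^{1,p}(\M;\R^k)$; combined with the strong $L^p$-convergence $v_n\to u$, this forces $v_n \rightharpoonup u$ weakly in $W^{1,p}(\M;\R^k)$.

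The heart of the argument is to compare the transported density $F_n^*W_n(du_n)$ with $W_\infty(dv_n)$ on $\M$. I will apply the Lipschitz-in-metric bound \eqref{eq:property_d_energy_density} pointwise with $L = dF_n$ and $A = du_n$, noting that $A\circ L = d(u_n\circ F_n) = dv_n$, to obtain
\beq
\label{eq:lb_pointwise_plan}
\bigl| W_\infty(dv_n)^{1/p} - F_n^* W_n(du_n)^{1/p} \bigr| \le C\bigl(1 + F_n^* |du_n|\bigr) \, \DIS dF_n \qquad \text{a.e. on } \M.
\eeq
Raising \eqref{eq:lb_pointwise_plan} to the $p$-th power and integrating over $\M$ against $\Volume$, then using a change of variables with \eqref{eq:volume_conv} to control $\int_\M(F_n^*|du_n|)^p\,\Volume$ by $\int_{\M_n}|du_n|^p\,\Volumen$, the right-hand side is bounded by $C\|\DIS dF_n\|_\infty^p$ times a quantity uniformly bounded in $n$. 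Under uniform convergence, $\|\DIS dF_n\|_\infty\to 0$ by \eqref{eq:uniform_asymptotic_rigidity}, so the $L^p(\M,\Volume)$-norm of the difference in \eqref{eq:lb_pointwise_plan} vanishes. A reverse triangle inequality then gives
\[
\lim_n \Brk{\brk{\int_\M W_\infty(dv_n)\,\Volume}^{1/p} - \brk{\int_\M F_n^*W_n(du_n)\,\Volume}^{1/p}} = 0.
\]
Moreover, \eqref{eq:boundedness} combined with \eqref{eq:volume_conv} shows that $\int_\M F_n^*W_n(du_n)\,\Volume$ is asymptotic to $\int_\M F_n^*W_n(du_n)\,\VolumeFn = E_n[u_n]$, so I conclude
\[
\liminf_n E_n[u_n] \;=\; \liminf_n \int_\M W_\infty(dv_n)\,\Volume.
\]

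To close the argument I use $W_\infty\ge QW_\infty$ pointwise together with the weak $W^{1,p}(\M;\R^k)$ lower semicontinuity of the quasiconvex integrand $QW_\infty$, a Riemannian version of the Acerbi--Fusco theorem already proved in \cite[Appendix~B]{KM14}. Combined with $v_n\rightharpoonup u$, this yields
\[
\liminf_n \int_\M W_\infty(dv_n)\,\Volume \;\ge\; \liminf_n \int_\M QW_\infty(dv_n)\,\Volume \;\ge\; \int_\M QW_\infty(du)\,\Volume \;=\; \Gamma E[u],
\]
completing the proof. I expect the main obstacle to be the pointwise comparison \eqref{eq:lb_pointwise_plan}: it relies both on the Lipschitz dependence of $W$ on the metric built into \defref{def:elastic_energy}(2d) and, crucially, on the \emph{uniform} (not merely $L^p$) smallness of $\DIS dF_n$. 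This is precisely why the uniform convergence hypothesis is indispensable and why the lower bound cannot, as stated, be extended to the mean-convergence setting.
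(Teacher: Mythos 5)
Your proposal is correct and follows essentially the same route as the paper: coercivity gives $W^{1,p}$-boundedness and weak convergence of $v_n=u_n\circ F_n$, the Lipschitz-in-metric bound \eqref{eq:property_d_energy_density} with $L=dF_n$, $A=du_n$ (together with the uniform smallness $\|\DIS dF_n\|_\infty\to 0$ and volume convergence) transports the integral from $\M_n$ to $\M$ with the limit density $W_\infty$, and then $W_\infty\ge QW_\infty$ plus weak lower semicontinuity of the quasiconvex envelope closes the estimate. The only cosmetic difference is that you phrase it as a direct $\Gamma$-liminf inequality for arbitrary $u_n\to u$, while the paper works with a recovery sequence achieving $E_\infty[u]$; these are interchangeable here, and your concluding observation on where uniform convergence (as opposed to mean convergence) is indispensable matches the structure of Theorem~\ref{thm:Gamma_convergence_general}.
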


\begin{proof}
Let $u\in W^{1,p}(\M;\R^k)$, and let $u_n\to u$ be a recovery sequence. 
If $E_\infty[u] = \infty$, then the claim is trivial. 
Otherwise, we may assume that $u_n\in W^{1,p}(\M_n;\R^k)$ for all $n$.
By the coercivity of $W_n$, $u_n$ is bounded in $W^{1,p}$, hence $u_n\circ F_n$ is bounded in $W^{1,p}(\M;\R^k)$ and weakly $W^{1,p}$-converges (modulo a subsequence) to $u$.

We will show that
\beq
\label{eq:lower_bound0}
\begin{split}
E_\infty[u] &= \lim_n E_n[u_n] = \lim_n \int_{\M_n} W_n(du_n) \,\Volumen \\
		&= \lim_n \int_{\M} W_\infty(d(u_n\circ F_n)) \,\Volume \\
		&\ge \lim_n \int_\M QW_\infty(d(u_n\circ F_n)) \,\Volume \\
		&\ge \int_\M QW_\infty(du) \,\Volume = \Gamma E[u].
\end{split}
\eeq
The passage from the second to the third line follows from the definition of the quasi-convex envelope. The passage from the third to the fourth line follows from the weak lower-semicontinuity of an integral functional with a Carath\'eodory quasiconvex integrand (see Section 3.4 in \cite{KM14} for details). The rest of the proof derives the equality between the first and the second line.

First, by the same arguments as in \eqref{eq:upper_bound_2},
\beq
\label{eq:lower_bound1}
\begin{split}
\lim_n \int_{\M_n} W_n(du_n) \,\Volumen = \lim_n \int_{\M} F_n^*W_n(du_n) \,\Volume .
\end{split} 
\eeq

Second, we use the Lipschitz continuity \eqref{eq:property_d_energy_density} of $W$, with $L = dF_n$ and $A=du_n$, and obtain that
\beq
\label{eq:lower_bound2}
\begin{split}
&\left| \brk{\int_{\M} F_n^*W_n(du_n) \,\Volume}^{1/p}  - \brk{\int_{\M} W_\infty(d(u_n\circ F_n)) \,\Volume }^{1/p}\right|\\
	&\qquad \le \brk{\int_{\M} \left|F_n^*W_n(du_n)^{1/p} - W_\infty(d(u_n\circ F_n))^{1/p}\right|^p \,\Volume }^{1/p}\\
	&\qquad \le C \brk{\int_\M (1 + F_n^*|du_n|)^p \, \dist_{\g,F_n^*\g_n}^p (dF_n, \SO{\g,F_n^*\g_n}) \, \Volume }^{1/p} \\
	&\qquad \le C \left\|\dist_{\g,F_n^*\g_n} (dF_n, \SO{\g,F_n^*\g_n}) \right\|_\infty \brk{\int_\M (1 + F_n^*|du_n|)^p \,  \, \Volume }^{1/p} \to 0
\end{split}
\eeq
using the uniform convergence \eqref{eq:uniform_asymptotic_rigidity} and the fact that $F_n^*|du_n|$ is uniformly bounded in $L^p(\M,\g)$.

Using \eqref{eq:lower_bound1} and \eqref{eq:lower_bound2} we obtain that
\beq
\label{eq:lower_bound4}
\lim_n E_n[u_n] = \lim_n \int_{\M_n} W_n(du_n) \,\Volumen= \lim_n \int_{\M} W_\infty(d(u_n\circ F_n)) \,\Volume
\eeq
which completes the proof.
\end{proof}

\subsection{Convergence of minimizers}
\label{sec:conv_of_minimizers}

The following proposition is a standard convergence of minimizers result that typically accompanies $\Gamma$-convergence results. 

\begin{proposition}
\label{pn:conv_of_min}
Assume that $\M_n\to\M$ either uniformly or in the mean, and that $E_{\M_n}$ $\Gamma$-converges to $E_\infty$ (in the case of uniform convergence we always have  $E_\infty=\Gamma E_{\M}$).
Let $u_n\in W^{1,p}(\M_n;\R^k)$ be a sequence of (approximate) minimizers of $E_{\M_n}$, and denote by $\overline{u_n}$ the mean of $u_n$. Then the translated sequence $u_n-\overline{u_n}$ is relatively compact (with respect to the $L^p$ topology defined by $F_n$), and all its limits points are minimizers of $E_\infty$.
Moreover,
\[
\lim_{n\to\infty} \,\,\,\inf_{L^p(\M_n;\R^k)} E_{\M_n} = \min_{L^p(\M;\R^k)} E_\infty.
\]
\end{proposition}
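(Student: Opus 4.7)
The plan is to follow the standard $\Gamma$-convergence recipe for convergence of (approximate) minimizers, adapted to our setting where the functionals live on different spaces $L^p(\M_n;\R^k)$. The only non-formal ingredient is a compactness estimate for the translated sequence $(u_n-\overline{u_n})\circ F_n$ in $L^p(\M;\R^k)$; once this is in hand, the facts that subsequential limits are minimizers of $E_\infty$ and that $\inf E_{\M_n}\to\min E_\infty$ follow from the abstract definition of $\Gamma$-convergence combined with the translation invariance of the energies. Translation invariance is automatic: $W_{(\M_n,\g_n)}$ depends only on $du$, so $E_{\M_n}[u+c]=E_{\M_n}[u]$ for every constant $c\in\R^k$, and any $\Gamma$-limit inherits this property.

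For compactness, I would first note that $\inf_{L^p(\M_n;\R^k)}E_{\M_n}$ is bounded from above uniformly in $n$: constants have energy at most $\gamma\,\textVol(\M_n)$ by \eqref{eq:boundedness}, and $\textVol(\M_n)\to\textVol(\M)$ by \eqref{eq:volume_conv}, so the approximate-minimality hypothesis bounds $E_{\M_n}[u_n]$. The coercivity \eqref{eq:coercivity} then bounds $\|du_n\|_{L^p(\M_n;\R^k)}$, and \lemref{lm:L_p_convergence} transfers this to a bound on $\|d(u_n\circ F_n)\|_{L^p(\M;\R^k)}$. Using translation invariance I may replace $u_n$ by $u_n-\overline{u_n}$ and assume $\overline{u_n}=0$; setting $w_n=u_n\circ F_n$ and $\rho_n=\VolumeFn/\Volume$ (with $\rho_n\to 1$ in $L^\infty$ by \eqref{eq:volume_conv}), this normalization reads $\int_\M w_n\,\rho_n\,\Volume=0$, whence the identity
\[
\textVol(\M)\,\bar{w}_n \;=\; \int_\M w_n\,(1-\rho_n)\,\Volume,
\]
combined with Poincar\'e--Wirtinger on $\M$ applied to $w_n-\bar{w}_n$ and the uniform bound on $\|dw_n\|_{L^p(\M;\R^k)}$, yields $|\bar{w}_n|\to 0$. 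Consequently $w_n$ is bounded in $W^{1,p}(\M;\R^k)$, and Rellich--Kondrachov gives relative compactness in $L^p(\M;\R^k)$.

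For the remaining conclusions, suppose $(u_n-\overline{u_n})\circ F_n\to u$ in $L^p(\M;\R^k)$ along a subsequence. For any $v\in L^p(\M;\R^k)$, let $v_n\in L^p(\M_n;\R^k)$ be a recovery sequence for the $\Gamma$-convergence $E_{\M_n}\to E_\infty$. Approximate minimality together with translation invariance gives
\[
E_\infty[u] \;\le\; \liminf_n E_{\M_n}[u_n-\overline{u_n}] \;=\; \liminf_n E_{\M_n}[u_n] \;\le\; \liminf_n \bigl(E_{\M_n}[v_n]+\e_n\bigr) \;=\; E_\infty[v],
\]
so $u$ minimizes $E_\infty$. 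Taking the infimum over $v$ on the right-hand side and combining with the approximate-minimality estimate $E_{\M_n}[u_n]\le\inf E_{\M_n}+\e_n$ yields $\lim_n\inf_{L^p(\M_n;\R^k)}E_{\M_n}=\min_{L^p(\M;\R^k)}E_\infty$.

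The main obstacle is the mean-control step within the compactness argument: the normalization $\overline{u_n}=0$ is taken with respect to the measure of $\M_n$, whereas Poincar\'e on $\M$ requires a vanishing mean with respect to the measure of $\M$. It is precisely the $L^\infty$ convergence $\rho_n\to 1$ from \eqref{eq:volume_conv}, and not merely weak convergence of the volume measures, that forces the discrepancy between the two means to vanish, explaining why the relatively strong volume-convergence condition built into Definition \ref{def:weak_convergence} is essential for this argument.
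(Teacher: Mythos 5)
Your argument is correct and takes essentially the same route as the paper: normalize by translation invariance, bound $\inf E_{\M_n}$, invoke coercivity and Lemma~\ref{lm:L_p_convergence} to bound $d(u_n\circ F_n)$ in $L^p(\M)$, use Poincar\'e to get $W^{1,p}$-boundedness and hence $L^p$-compactness, then run the standard $\Gamma$-convergence argument for convergence of (approximate) minima and minimizers. The one place you diverge is worth noting as an improvement rather than a mere variant: the paper passes from $\overline{u_n}=0$ (mean with respect to $\textVol_{\g_n}$) directly to Poincar\'e on $(\M,\g)$ without remarking that Poincar\'e controls the oscillation around the $\textVol_\g$-mean of $u_n\circ F_n$, not the $\textVol_{\g_n}$-mean, and you supply exactly the estimate
\[
\textVol(\M)\,\bar w_n=\int_\M w_n(1-\rho_n)\,\Volume,\qquad \rho_n=\tfrac{\VolumeFn}{\Volume},
\]
together with $\|1-\rho_n\|_\infty\to 0$ to close the loop. (To make this self-contained, note that the resulting bound
$|\bar w_n|\lesssim\|1-\rho_n\|_\infty\bigl(\|dw_n\|_{L^p}+|\bar w_n|\bigr)$
can be absorbed on the left for $n$ large, so no circularity arises.) A second, cosmetic difference: the paper obtains the upper bound on $\inf E_{\M_n}$ from a recovery sequence for an arbitrary $w\in W^{1,p}(\M;\R^k)$, while you use the constant map and the growth condition \eqref{eq:boundedness} together with \eqref{eq:volume_conv}; both work. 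Your closing remark that $L^\infty$ convergence of $\rho_n$ is ``essential'' slightly overstates the case (convergence of $\rho_n$ in $L^{p'}$, $1/p+1/p'=1$, would suffice for the mean-matching step), but the point that mere weak convergence of measures would not do is correct.
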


\begin{proof}
Once again, we write $E_n = E_{\M_n}$ and $\Gamma E = \Gamma E_{\M}$.
Let $u_n$ be a sequence of approximate minimizers of $E_n$. Since $E_n$ is invariant to translations (in the sense that $E_n[u_n] = E_n[u_n + x]$ for every $x\in \R^k$), we will assume w.l.o.g. that $\overline{u_n}=0$. 
We first prove that it is relatively compact, i.e. that every subsequence (not relabeled) of $u_n$ has a subsequence  converging in $L^p$.

Let $w\in W^{1,p}(\M_n;\R^k)$ be arbitrary and let $w_n\in L^p(\M_n;\R^k)$ be a recovery sequence for $w$. Then, by Theorem \ref{thm:Gamma_convergence_general},
\[
\inf_{L^p} \,\,E_n[\cdot] \le E_n[w_n] \limarrow_{n\to\infty} E_\infty[w] \le \Gamma E[w]  <\infty.
\]
This shows  that $\inf_{L^p} E_n[\cdot]$ is a bounded sequence.

It follows that $E_n[u_n]$ is bounded, hence, by coercivity, $du_n$ is uniformly bounded in $L^p$. Together with the Poincar\'e inequality, we obtain that $u_n\circ F_n$ is uniformly bounded in $W^{1,p}(\M;\R^k)$. This implies the existence of a (not relabeled) subsequence $u_n\to u$ in $L^p$, proving the relative compactness of $u_n$.

We now prove that $u$ is a minimizer of $E_\infty$. Let $w\in L^p(\M;\R^k)$ be an arbitrary function, and  let $w_n\in L^p(\M_n;\R^k)$ be a recovery sequence for $w$. Then,
\[
E_\infty[w] = \lim_{n\to\infty} E_n[w_n] \ge \lim_{n\to\infty} \inf_{L^p} E_n[\cdot] = \lim_{n\to\infty} E_n[u_n] \ge E_\infty[u],
\]
where the last inequality follows from the lower-semicontinuity property of $\Gamma$-limits.
Since $w$ is arbitrary, $u$ is a minimizer of $E_\infty$.
Moreover, by choosing $w=u$ we conclude that
\[
E_\infty[u]  = \lim_{n\to\infty} \inf_{L^p} E_n[\cdot].
\]
\end{proof}

\section{Discussion}
\label{sec:discussion}

In this paper we proved a $\Gamma$-convergence result for elastic models of uniformly converging manifolds.
This result is intrinsic, in the following senses: first, it does not depend on the parametrizations of the manifolds $\M_n$ and $\M$. 
Second, while the $L^p$-topology described in Section \ref{sec:L_p_top} depends on the maps $F_n$, the limiting functional $\Gamma E_\M$ itself is independent of these maps.
That is, $\Gamma E_\M[u]$ is defined independently of the choice of maps, even though recovery sequences converging to $u$ depend on them.
The intrinsic nature of the limit model highlights the geometric nature of non-Euclidean elasticity, which is sometimes obscured by choices of coordinates and maps.

For manifolds that converge in the mean, we do not obtain a $\Gamma$-convergence result, but, similarly, the $\Gamma$-upper bound is independent of parametrization and of the maps $F_n$.

\paragraph{Boundary conditions and external forces}
For the sake of clarity, we limited our analysis to unconstrained systems, i.e., systems without external forces and without boundary constraints.
Forces and boundary conditions can be included in a standard way. Note, however, that boundary conditions should be specified for configurations of each of the manifolds $\M_n$ and $\M$, and the maps $F_n:\M\to \M_n$ that realize the convergence should map admissible configurations to admissible configurations.
That is, the maps $F_n$ must satisfy the condition that $u_n\in W^{1,p}(\M_n; \R^k)$ is $\M_n$-admissible if and only if $u_n\circ F_n$ is $\M$-admissible.

\paragraph{Applications of the main theorem to dislocation theory}
As discussed in the Introduction, bodies with continuously-distributed dislocations are commonly modeled as smooth Weitzenb\"ock manifolds $(\M,\g,\nabla)$, where the affine connection $\nabla$ is metrically consistent with $\g$ and flat (hence, uniquely determined by its torsion tensor).
In the terminology of this paper, this corresponds to a body manifold $(\M,d,\M,\g,\nabla)$ (since $\M$ is smooth $\tM=\M$).
This model is, naturally, viewed as a limit of bodies with finitely many dislocations $(\M_n,d_n,\tM_n,\g_n,\nabla_n)$, were $\nabla_n$ is the Levi-Civita connection, i.e., torsionless.

In this paper we associate with each body manifold $(\M,d,\tM,\g)$ an elastic energy functional $E_{(\M,\g)}$. 
In the case of continuously-distributed dislocations, we would expect to have an energy functional that depends on the connection, namely,
 $E_{(\M,\g,\nabla)}$.
When there are no distributed dislocations, $\nabla$ is the Levi-Civita connection, $\nabla^{LC}$, hence it is natural to assume $E_{(\M,\g,\nabla^{LC})}=E_{(\M,\g)}$, so that $E_{(\M,\g,\nabla)}$ extends $E_{(\M,\g)}$.
Since a body with a continuous distribution of dislocations $(\M,d,\M,\g,\nabla)$ is an effective model for bodies with finitely many dislocations $(\M_n,d_n,\tM_n,\g_n,\nabla_n^{LC})$, we expect $E_{(\M,\g,\nabla)}$ to be a limit of the elastic energies $E_{(\M_n,\g_n)}$ (up to relaxation).

Two questions arise in this context: 
First, is $E_{(\M,\g,\nabla)}$ well-defined as a limit of energies $E_{(\M_n,\g_n)}$, independently of the converging sequence of manifolds?
Second, how does $E_{(\M,\g,\nabla)}$ depend on $\nabla$?

The first part of \thmref{thm:Gamma_convergence_general} implies that the limiting elastic energy does not depend on the limiting process as long as the sequence of manifolds with finitely-many dislocations converges uniformly (like the variation of the constructions in \cite{KM15,KM15b} presented in Example~2 in Section \ref{sec:Remarks_and_Examples}). In this case, the limiting energy does \emph{not} depend on the connection $\nabla$ (or equivalently on the torsion). 
In other words, the limiting elastic model \emph{is only sensitive to the metric structure of the limit manifold}.

If one rather considers a larger class of body manifolds that converges to the limit $(\M,d,\M,\g,\nabla)$, including the original constructions in \cite{KM15,KM15b}  (which converges only in the mean, see Example~1 in Section \ref{sec:Remarks_and_Examples}), then our results do not guarantee the existence of a $\Gamma$-limit  independent of the converging sequence. 
However, one would still expect that if a specific sequence of bodies with dislocations has a $\Gamma$-limit energy, then the effect of the torsion would be an additional compatibility constrain, and hence would increase the energy compared to the torsion-free case. 
In other words, the inequality $E_{(\M,\g,\nabla)}\ge E_{(\M,\g)}$ is expected.
The second part of Theorem~\ref{thm:Gamma_convergence_general} shows that it is not the case, as the limit energy is bounded from \emph{above} by that determined by the metric.
In particular, if $(\M,\g)$ can be isometrically embedded in $\R^k$ (e.g.~as in the main example of \cite{KM15}), then $(\M,d,\M,\g,\nabla)$ has a zero energy embedding in $\R^k$, regardless of $\nabla$ and the converging sequence.

\paragraph{Other applications of the main theorem}
The first part of Theorem \ref{thm:Gamma_convergence_general} also holds for approximations of a surface $\M$ by Euclidean triangles $\M_n$, as described in Example~3 in Section~\ref{sec:Remarks_and_Examples}.
The $\Gamma$-convergence still holds if one considers only maps $\M_n\to \R^k$ which are affine on every triangle.
Indeed, the only change in the proof  is to replace the recovery sequence $u_n:\M_n\to \R^k$ in Proposition \ref{pn:G_conv_L_p_upper_bound} with a piecewise affine sequence that $L^p$-converges to the same limit $u:\M\to\R^k$; this is always possible.
This implies the consistency of finite element approximations based on triangulations of surfaces (or on simplices in higher dimensions) and their piecewise affine embeddings into  Euclidean space.

Finally, our results establish the structural stability of elastic models that are $p$-regular according to \defref{def:elastic_energy}: if two metrics are arbitrarily close to each other (with respect to the sup-norm), then their elastic energies are arbitrarily close.
This observation validates experimental estimates of reference metrics via interpolations based on finite sets of  measured distances (e.g., \cite{SRS07}).

\paragraph{Other rigidity criteria}
The distortion of a linear map $\DIS A$ defined in \eqref{eq:def_local_dis} plays a role both in the definition of convergence of body manifolds in \secref{sec:conv_of_manifolds} and in the Lipschitz continuity property of $p$-regular energy densities in \defref{def:elastic_energy}.

In principle, one can choose other measures for the distortion of a linear map, for which other energy densities may be $p$-regular according to \defref{def:elastic_energy}.
If we change the definition of $\DIS A$ accordingly also in the definitions of converging body manifolds (Definitions \ref{def:distortions}--\ref{def:weak_convergence}), then our results do not change, providing that for uniformly converging body manifolds, the uniform convergence of the new distortion criterion in \defref{def:uniform_convergence} continues to imply uniform bi-Lipschitzness \eqref{eq:uni_bi_Lipschitz} and uniform volume convergence \eqref{eq:volume_conv}.  
Even if this is not true for the new distortion criterion, \eqref{eq:uni_bi_Lipschitz}  and \eqref{eq:volume_conv} can be assumed in addition to \eqref{eq:uniform_asymptotic_rigidity} in the definition of uniform convergence \ref{def:uniform_convergence} (as in the definition of mean convergence \ref{def:weak_convergence}), and the proof will still hold.

\paragraph{Open questions}
Outside the context of dimension reduction, this is, to the best of our knowledge, the first paper to consider $\Gamma$-convergence of elastic energies of converging manifolds. Unlike dimension reduction, where the converging manifolds are ordered by an inclusion relation, here the notion of convergence allows for varying topologies and metric structures.
Naturally, there remain numerous open questions, among which are:

\begin{enumerate}
\item Do the elastic energies $\Gamma$-converge in the case of manifolds converging in the mean?
Even if such a result does not hold in general, it is of interest to determine whether it holds for the specific sequence of manifolds with dislocations considered in \cite{KM15,KM15b} 
 (see also Example~1 in Section~\ref{sec:Remarks_and_Examples}).

\item It would also be interesting to relax some of the assumptions on the elastic energy densities. In particular, for a more physical model one may want to  modify the growth condition in Definition~\ref{def:elastic_energy}, such to include densities tending to infinity when deformations tend to be singular (the relaxation of the growth condition is of interest in many other contexts as well, see \cite{CD15} for details).

\item This paper considers ``bulk" elasticity---the embedding of a $k$-dimensional manifold in the $k$-dimensional Euclidean space. Another main theme in elasticity theory (both classical and non-Euclidean) is the derivation of dimensionally reduced models for bodies with one or more slender dimensions (see e.g. \cite{LR95,FJM02b,KS14,KM14}).
An interesting question concerns the two-parameter limit of changing metrics and dimension reduction. A result in this direction would also relate to the von-K\'arm\'an limits of slender bodies whose metrics tend to a Euclidean metric; such a situation was treated in \cite{LMP10}.

\item Another question, which unlike the previous ones is of geometric nature rather than analytic,  concerns the role of global distortion in manifolds that converge in the mean. An asymptotically vanishing global distortion is part of our mean convergence definition (Condition~2 in \defref{def:weak_convergence}). Its only role in the present paper is to guarantee the uniqueness of the limit (as vanishing distortion implies Gromov-Hausdorff convergence); it doesn't play any role in the subsequent analysis.

It would be interesting to understand whether this condition can be omitted, that is to say, whether asymptotic vanishing of the mean local distortions (Condition~3 in \defref{def:weak_convergence}) suffices to define a notion of convergence (i.e., that the limit does not depend on the choice of morphisms). 
Such a result would require rigidity estimates for Riemannian manifolds, analogous to Reshetnyak's generalization of Liouville's rigidity theorem (which is in a Euclidean setting, see \cite{Res67} for the original paper and \cite{FJM02b} for a modern restatement).
\end{enumerate}

\paragraph{Acknowledgements}
This research was supported by the Israel-US Binational Foundation (Grant No. 2010129), by the Israel Science Foundation (Grant No. 661/13) and by a grant from the Ministry of Science, Technology and Space, Israel and the Russian Foundation for Basic Research, the Russian Federation.

\footnotesize{
\bibliographystyle{amsalpha}
\newcommand{\etalchar}[1]{$^{#1}$}
\providecommand{\bysame}{\leavevmode\hbox to3em{\hrulefill}\thinspace}
\providecommand{\MR}{\relax\ifhmode\unskip\space\fi MR }
\providecommand{\MRhref}[2]{%
  \href{http://www.ams.org/mathscinet-getitem?mr=#1}{#2}
}
\providecommand{\href}[2]{#2}

}

\end{document}